\begin{document} 
\newtheorem{prop}{Proposition}[section]
\newtheorem{Def}{Definition}[section] \newtheorem{theorem}{Theorem}[section]
\newtheorem{lemma}{Lemma}[section] \newtheorem{Cor}{Corollary}[section]

\title[Maxwell-Klein-Gordon in Lorenz gauge]{\bf Low regularity local well-posedness for the Maxwell-Klein-Gordon equations in Lorenz gauge}
\author[Hartmut Pecher]{
{\bf Hartmut Pecher}\\
Fachbereich Mathematik und Naturwissenschaften\\
Bergische Universit\"at Wuppertal\\
Gau{\ss}str.  20\\
42097 Wuppertal\\
Germany\\
e-mail {\tt pecher@math.uni-wuppertal.de}}
\date{}

\begin{abstract}
The Cauchy problem for the Maxwell-Klein-Gordon equations in Lorenz gauge in two and three space dimensions is locally well-posed for low regularity data without finite energy. The result relies on the null structure for the main bilinear terms which was shown to be not only present in Coulomb gauge but also in Lorenz gauge by Selberg and Tesfahun, who proved global well-posedness for finite energy data in three space dimensions. This null structure is combined with product estimates for wave-Sobolev spaces given systematically by d'Ancona, Foschi and Selberg.
\end{abstract}
\maketitle
\renewcommand{\thefootnote}{\fnsymbol{footnote}}
\footnotetext{\hspace{-1.5em}{\it 2000 Mathematics Subject Classification:} 
35Q61, 35L70 \\
{\it Key words and phrases:} Maxwell-Klein-Gordon,  
local well-posedness, Fourier restriction norm method}
\normalsize 
\setcounter{section}{0}
\section{Introduction and main results}
\noindent Consider the Maxwell-Klein-Gordon system 
\begin{align}
\label{1}
\partial^{\nu} F_{\mu \nu} & =  j_{\mu} \\
\label{2}
D^{(A)}_{\mu} D^{(A)\mu} \phi & = m^2 \phi \, ,
\end{align}
where $m>0$ is a constant and
\begin{align}
\label{3}
F_{\mu \nu} & := \partial_{\mu} A_{\nu} - \partial_{\nu} A_{\mu} \\
\label{4}
D^{(A)}_{\mu} \phi & := \partial_{\mu} - iA_{\mu} \phi \\
\label{5}
j_{\mu} & := Im(\phi \overline{D^{(A)}_{\mu} \phi}) = Im(\phi \overline{\partial_{\mu} \phi}) + |\phi|^2 A_{\mu} \, .
\end{align}
Here $F_{\mu \nu} : {\mathbb R}^{n+1} \to {\mathbb R}$ denotes the electromagnetic field, $\phi : {\mathbb R}^{n+1} \to {\mathbb C}$ a scalar field and $A_{\nu} : {\mathbb R}^{n+1} \to {\mathbb R}$ the potential. We use the notation $\partial_{\mu} = \frac{\partial}{\partial x_{\mu}}$, where we write $(x^0,x^1,...,x^n) = (t,x^1,...,x^n)$ and also $\partial_0 = \partial_t$ and $\nabla = (\partial_1,...,\partial_n)$. Roman indices run over $1,...,n$ and greek indices over $0,...,n$ and repeated upper/lower indices are summed. Indices are raised and lowered using the Minkowski metric $diag(-1,1,...,1)$.

The Maxwell-Klein-Gordon system describes the motion of a spin 0 particle with mass $m$ self-interacting with an electromagnetic field.

We are interested in the Cauchy problem with data
$\phi(x,0) = \phi_0(x)$ , $\partial_t \phi(x,0) $ $= \phi_1(x)$ , $F_{\mu \nu}(x,0) =  F^0_{\mu \nu}(x)$ , $A_{\nu}(x,0) = a_{0 \nu}(x)$ , $\partial_t A_{\nu}(x,0) = \dot{a}_{0 \nu}(x)$. The potential $A$ is not uniquely determined but one has gauge freedom. The Maxwell-Klein-Gordon equation is namely invariant under the gauge transformation
$\phi \to \phi' = e^{i\chi}\phi$ , $A_{\mu} \to A'_{\mu} = A_{\mu} + \partial_{\mu} \chi$
for any $\chi: {\mathbb R}^{n+1} \to {\mathbb R}$.

Most of the results obtained so far were given in Coulomb gauge $\partial^j A_j = 0$. Klainerman and Machedon \cite{KM} showed global well-posedness in energy space and above, i.e. for data $\phi_0 \in H^s$ , $\phi_1 \in H^{s-1}$ , $a_{0 \nu} \in H^s$ , $\dot{a} \in H^{s-1}$ with $s \ge 1$ in $n=3$ dimensions improving earlier results of Eardley and Moncrief \cite{EM} for smooth data. They used that the nonlinearities fulfill a null condition in the case of the Coulomb gauge. This global well-posedness result was improved by Keel, Roy and Tao \cite{KRT}, who had only to assume $s > \frac{\sqrt 3}{2}$. Local well-posedness for low regularity data was shown by Cuccagna \cite{C} for $s > 3/4$ and finally almost down to the critical regularity with respect to scaling by Machedon and Sterbenz \cite{MS} for $s > 1/2$, all these results for three space dimensions and in Coulomb gauge. In four space dimensions Selberg \cite{S} showed local well-posedness in Coulomb gauge for $s>1$. Recently Krieger, Sterbenz and Tataru \cite{KST} showed global well-posedness for data with small energy data ($s=1$) for $n=4$, which is the critical space. For space dimension $n \ge 6$ and small critical Sobolev norm for the data local well-posedness was shown by Rodnianski and Tao \cite{RT}. In general the problem seems to be easier in higher dimensions. In temporal gauge local well-posedness was shown for $n=3$ and $ s > 3/4$ for the more general Yang-Mills equations by Tao \cite{T}.

We are interested to consider the Maxwell-Klein-Gordon equations in Lorenz gauge $\partial^{\mu} A_{\mu} = 0$ which was considered much less in the literature because the nonlinear term $Im(\phi \overline{\partial_{\mu} \phi})$ has no null structure. There is a result by Moncrief \cite{M} in two space dimensions for smooth data, i.e. $s \ge 2$. In three space dimensions the most important progress was made by Selberg and Tesfahun \cite{ST} who were able to circumvent the problem of the missing null condition in the equations for $A_{\mu}$ by showing that the decisive nonlinearities in the equations for $\phi$ as well as $F_{\mu \nu}$ fulfill such a null condition which allows to show that global well-posedness holds for finite energy data, i.e. $\phi_0 \in H^1$, $\phi_1 \in L^2$ , $F^0_{\mu \nu} \in L^2$ , $a_{0\nu} \in \dot{H}^1$ , $\dot{a}_{0 \nu} \in L^2$, and three space dimensions, where $\phi \in C^0({\mathbb R},H^1) \cap C^1({\mathbb R},L^2)$ and $F_{\mu \nu} \in C^0({\mathbb R},L^2)$. The potential possibly loses some regularity compared to the data but as remarked also by the authors this is not the main point because one is primarily interested in the regularity of $\phi$ and $F_{\mu \nu}$. Persistence of higher regularity for the solution also holds.

A null structure in Lorenz gauge was first detected for the Maxwell-Dirac system by d'Ancona, Foschi and Selberg \cite{AFS1}.

The paper \cite{ST} is the basis for our results. We show that local well-posedness can also be proven for less regular data without finite energy, namely for $s > 3/4$ in space dimension $n=3$ and also in dimension $n=2$ for a slightly different data space. We show that in the case $n=2$ corresponding null conditions also hold. When this has been done the necessary estimates in Bourgain type spaces mainly rely on the bilinear estimates in wave-Sobolev spaces given by d'Ancona, Foschi and Selberg for $n=3$ \cite{AFS3} and $n=2$ \cite{AFS2}.

We now formulate our main result. We assume the Lorenz condition
\begin{equation}
\label{6}
\partial^{\mu} A_{\mu} = 0
\end{equation}
and Cauchy data
\begin{align}
\label{7}
\phi(x,0) &= \phi_0(x) \in H^s \quad , \quad \partial_t \phi(x,0) = \phi_1(x) \in H^{s-1} \, , \\
\label{8}
F_{\mu \nu}(x,0)& = F^0_{\mu \nu}(x) \,\mbox{with}\, F^0_{\mu \nu} \in H^{s-1} \, \mbox{for} \, n=3 \,\mbox{and}\, |\nabla|^{-\epsilon} F^0_{\mu \nu} \in H^{s-1+\epsilon} \,\mbox{for}\, n=2 \, , 
\end{align}
where $\epsilon$ is a small positive constant and
\begin{equation}
\label{9}
A_{\nu}(x,0) = a_{0 \nu}(x) \quad , \quad \partial_t A_{\nu}(x,0) = \dot{a}_{0 \nu}(x) \, ,
\end{equation} 
which fulfill the following conditions
\begin{equation}
\label{10}
a_{00} = \dot{a}_{00} = 0 \, , 
\end{equation}
\begin{equation}
\label{11}
\nabla a_{0j} \in H^{s-1} \, , \,  \dot{a}_{0j} \in H^{s-1}\, \mbox{for} \, n=3 \, ,
\end{equation}
\begin{equation}
\label{11'}
|\nabla|^{1-\epsilon} a_{0j} \in H^{s-1+\epsilon} \, , \, |\nabla|^{-\epsilon}\dot{a}_{0j} \in H^{s-1+\epsilon} \, \mbox{for} \, n=2 \, ,
\end{equation}
\begin{equation}
\label{12}
\partial^k a_{0k} = 0 \, ,
\end{equation}
\begin{equation}
\label{13}
\partial_j a_{0k} - \partial_k a_{0j} = F^0_{jk} \, ,
\end{equation}
\begin{equation}
\label{14}
\dot{a}_{0k} = F^0_{0k} \, , 
\end{equation}
\begin{equation}
\label{15}
\partial^k F^0_{0k}  = Im(\phi_0 \overline{\phi}_1) \, .
\end{equation}
(\ref{10}) can be assumed because otherwise the Lorenz condition does not determine the potential uniquely. (\ref{12}) follow from the Lorenz condition (\ref{6}) in connection with (\ref{10}). (\ref{13}) follows from (\ref{3}), similarly (\ref{14}) from (\ref{3}) and (\ref{10}). (\ref{1}) requires
$$ \partial^k F^0_{0k} = j_0(0) = Im(\phi_0 \overline{\phi}_1) + |\phi_0|^2 a_{00} = Im(\phi_0 \overline{\phi}_1) \, $$
thus (\ref{15}). By (\ref{12}) we have
$$ \Delta a_{0j} = \partial^k \partial_k a_{0j} = \partial^k(\partial^j a_{0k} - F^0_{jk}) = - \partial^k F^0_{jk} \, , $$
so that $a_{0j}$ is uniquely determined as
$$ a_{0j} = (-\Delta)^{-1} \partial^k F^0_{jk} $$
and fulfills (\ref{11}) and (\ref{11'}).

We define the wave-Sobolev spaces $X^{s,b}_{\pm}$ as the completion of the Schwarz space $\mathcal{S}({\mathbb R}^{n+1})$ with respect to the norm
$$ \|u\|_{X^{s,b}_{\pm}} = \| \langle \xi \rangle^s \langle  \tau \pm |\xi| \rangle^b \widehat{u}(\tau,\xi) \|_{L^2_{\tau \xi}}  $$
and $X^{s,b}_{\pm}[0,T]$ as the space of the restrictions to $[0,T] \times \mathbb{R}^n$.

Our main theorem reads as follows:
\begin{theorem}
\label{Theorem1}
Assume $n=3$ and $s = \frac{3}{4} + \delta$ , $r=\frac{1}{2}+\delta$ or $n=2$ and $s=\frac{3}{4}+\delta$, $r=\frac{1}{4}+\delta$ 
, where $\delta > 0$ is a small number. The data are assumed to fulfill (\ref{7}) - (\ref{15}). Then the öproblem (\ref{1}) - (\ref{6}) has a unique local solution
$$ \phi \in X_+^{s,\frac{1}{2}+}[0,T] +X_-^{s,\frac{1}{2}+}[0,T] \, , \, \partial_t \phi \in X_+^{s-1,\frac{1}{2}+}[0,T] +X_-^{s-1,\frac{1}{2}+}[0,T]$$
and
$$ F_{\mu \nu} \in X_+^{s-1,\frac{1}{2}+}[0,T] +X_-^{s-1,\frac{1}{2}+}[0,T]$$
in the case $n=3$ and
$$ |\nabla|^{-\epsilon} F_{\mu \nu} \in X_+^{s-1+\epsilon,\frac{1}{2}+}[0,T] +X_-^{s-1+\epsilon,\frac{1}{2}+}[0,T]$$
in the case $n=2$ relative to a potential $A=(A_0,A_1,...,A_n)$, where
$A = A^{hom}_+ + A^{hom}_- + A^{inh}_+ + A^{inh}_-$ with
$|\nabla| A^{hom}_{\pm} \in X^{r-1,1-\epsilon_0}_{\pm}[0,T]$ and $A^{inh}_{\pm} \in X^{r,1-\epsilon_0}_{\pm}[0,T]$ for $n=3$ and $|\nabla|^{1-\epsilon} A^{hom}_{\pm} \in X^{r-1+\epsilon,1-\epsilon_0}_{\pm}[0,T]$ and $|\nabla|^{\epsilon_1} A^{inh}_{\pm} \in X^{r-\epsilon_1,1-\epsilon}_{\pm}[0,T]$ for $n=2$, where $\epsilon$ , $0 < \epsilon _0 < \delta$ and $\epsilon_1$ are small positive numbers. 
\end{theorem}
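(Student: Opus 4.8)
The plan is, after imposing the Lorenz gauge (\ref{6}), to rewrite the system as a coupled system of nonlinear wave equations for $\phi$, $F_{\mu\nu}$ and $A_\mu$, to split every field into half-waves, and to solve the resulting fixed-point problem by a contraction argument in the product of the spaces appearing in the statement. Concretely, (\ref{6}) turns Maxwell's equation (\ref{1}) into a wave equation for $A_\mu$ with right-hand side $-j_\mu$; differentiating (\ref{1}) and inserting (\ref{3}) gives a wave equation for $F_{\mu\nu}$ with source $\partial_\mu j_\nu-\partial_\nu j_\mu$; and, dropping $(\partial^\mu A_\mu)\phi$ by (\ref{6}), the Klein--Gordon equation (\ref{2}) becomes a wave equation for $\phi$ with right-hand side $m^2\phi+2iA^\mu\partial_\mu\phi+A_\mu A^\mu\phi$; recall from (\ref{5}) that $j_\mu=\operatorname{Im}(\phi\,\overline{\partial_\mu\phi})+|\phi|^2A_\mu$. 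I split $A_\mu=A_\mu^{hom}+A_\mu^{inh}$ into the free evolution of the data (\ref{9})--(\ref{11'}) and the Duhamel term with source $-j_\mu$ and vanishing data; the constraints (\ref{10})--(\ref{15}) verified in the introduction guarantee consistency. Each of $\phi$, $F_{\mu\nu}$, $A_\mu^{hom}$, $A_\mu^{inh}$ is then written as $u=u_++u_-$ with $u_\pm=\tfrac12\big(u\pm(i\Lambda)^{-1}\partial_t u\big)$, where $\Lambda=\langle\nabla\rangle$ for the massive field $\phi$ and $\Lambda=|\nabla|$ for the massless fields, so that each half $u_\pm$ satisfies a first-order equation whose Duhamel formula is naturally measured in $X^{s,b}_\pm$. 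This recasts the problem as a fixed-point equation for $(\phi_\pm,F_{\mu\nu,\pm},A^{inh}_{\mu,\pm})$ in the product of the spaces of the theorem.

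\textbf{Null structure.} Following Selberg and Tesfahun \cite{ST}, the dangerous bilinear terms are not estimated directly. In the $\phi$-equation, after replacing $A$ by the wave evolution of its source, $A^\mu\partial_\mu\phi$ is, modulo harmless contributions, a sum of null forms of type $Q_{\mu\nu}$ and $Q_{0j}$ with one argument $\phi$ and the other built from $\phi$; in the $F_{\mu\nu}$-equation, $\partial_\mu\operatorname{Im}(\phi\,\overline{\partial_\nu\phi})-\partial_\nu\operatorname{Im}(\phi\,\overline{\partial_\mu\phi})$ is, up to lower order terms, a combination of $Q_{\mu\nu}(\phi,\overline{\phi})$. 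The decisive point is that each such form has Fourier multiplier bounded by $|\xi|\,|\eta|\,\theta(\xi,\pm\eta)$, with $\theta(\xi,\pm\eta)$ the angle between the spatial frequencies, and $\theta(\xi,\pm\eta)^2$ controlled by $\big(\langle\tau\mp|\xi|\rangle+\langle\sigma\mp|\eta|\rangle+\langle(\tau+\sigma)\mp|\xi+\eta|\rangle\big)/\min(|\xi|,|\eta|)$. One has to verify that these algebraic identities and symbol bounds are purely dimension-analytic, hence remain valid for $n=2$.

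\textbf{Multilinear estimates.} With the null gain available, the bilinear terms are controlled by the product estimates for wave--Sobolev spaces of d'Ancona, Foschi and Selberg, \cite{AFS3} for $n=3$ and \cite{AFS2} for $n=2$, schematically $\|uv\|_{H^{-s_0,-b_0}}\lesssim\|u\|_{H^{s_1,b_1}}\|v\|_{H^{s_2,b_2}}$: after redistributing one derivative and a fraction of a modulation weight supplied by $\theta$, the remaining estimate falls inside the admissible range of exponents. The cubic term $A_\mu A^\mu\phi$ in the $\phi$-equation and the quartic contribution $|\phi|^2A_\mu$ entering the $A$- and $F$-equations carry no null structure but are of lower order and are handled by iterating the same product estimates. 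Throughout one must keep track of the fact that $A$ only enjoys regularity $r<s$ and sits at the higher modulation exponent $1-\epsilon_0$ (resp.\ $1-\epsilon$), and that for $n=2$ the data (\ref{8}), (\ref{11'}) force the $\epsilon$-shifted spaces and the auxiliary index $\epsilon_1$; the values $s=\tfrac34+\delta$ and $r=\tfrac12+\delta$ (resp.\ $\tfrac14+\delta$) are precisely those for which all these estimates close.

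\textbf{Main obstacle and conclusion.} The principal difficulty is the nonlinear estimate for the $\phi$-equation: $A^\mu\partial_\mu\phi$ couples a rough, high-modulation potential to a derivative of $\phi$, and only the null cancellation averts a (poly)logarithmic divergence at $s=\tfrac34+$; for $n=2$ the scaling is tighter and the $\epsilon$-losses from the data must be absorbed without spending the derivative supplied by $\theta$, which is the most delicate bookkeeping. Once all the multilinear estimates, together with their difference versions, are established with a positive power of $T$ in front --- gained from the gap between the index $\tfrac12+$ of the solution spaces and the index $-\tfrac12+$ on the nonlinear side via the standard time-localization properties of $X^{s,b}_\pm[0,T]$ --- the contraction mapping principle produces a unique solution on a short interval $[0,T]$, and the asserted regularity of $A=A^{hom}_++A^{hom}_-+A^{inh}_++A^{inh}_-$ is read off from its definition and Duhamel representation.
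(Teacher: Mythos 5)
Your overall strategy (reformulation as nonlinear wave equations under the Lorenz condition, half-wave splitting, Selberg--Tesfahun null structure, d'Ancona--Foschi--Selberg product estimates, contraction) matches the paper's, but there is a genuine gap: you never recover the Lorenz condition. You use (6) twice in setting up the reduced system --- to turn (1) into $\square A_\mu=-j_\mu$ and to drop $(\partial_\mu A^\mu)\phi$ from (2) --- but the reduced system is then solved by a fixed-point argument in which (6) is \emph{not} imposed as a constraint, so a solution of the reduced system is not a priori a solution of (1)--(6). The paper closes this loop in Section 4: setting $u=\partial^\mu A_\mu$, it computes from (3.1),(3.2) that $u$ satisfies the \emph{linear} equation $\square u=-2\,Re(\phi\nabla\overline{\phi})\cdot(-\Delta)^{-1}\nabla u+|\phi|^2u$ (a computation that uses the precise algebraic form of $\mathcal M$, including the cubic term), and that $u(0)=\partial_t u(0)=0$ by (10), (12) and the constraint (15); uniqueness for the linear equation then gives $u\equiv0$. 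Without this step the constructed $A$ need not be in Lorenz gauge, the identity $\mathcal M=M$ fails, and the claimed equivalence with (1)--(2) breaks down. Relatedly, putting $F_{\mu\nu}$ into the fixed point is unnecessary (its wave equation has source $\partial_\mu j_\nu-\partial_\nu j_\mu$ depending only on $(\phi,A)$, so it decouples and can be solved afterwards, as the paper does), but if you treat it as an independent unknown you must additionally verify $F_{\mu\nu}=\partial_\mu A_\nu-\partial_\nu A_\mu$, which again requires the gauge recovery.

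A second, smaller inaccuracy: the null structure of $A^\mu\partial_\mu\phi$ is not obtained by ``replacing $A$ by the wave evolution of its source'' so as to produce null forms with both arguments built from $\phi$. In Lorenz gauge the equation for $A$ has \emph{no} null structure at all; the point is rather that $A^\mu\partial_\mu\phi=P_1+P_2$, where $P_2$ involves only the curl part $R_jA_k-R_kA_j$ and is a genuine $Q_{jk}$-type null form in $(A,\phi)$, while $P_1$ involves $A_0$ and the gradient part and becomes the bounded-symbol bilinear operator $\mathcal A_{(\pm_1,\pm_2)}$ only after substituting the Lorenz relation $\partial^kA_k=\partial_tA_0$. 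The absence of null structure in the $A$-equation is compensated by the asymmetric choice of exponents ($A$ at temporal weight $1-\epsilon_0$, with $\mathcal N$ estimated only in $X^{r,-\epsilon_0+}_\pm$), which you gesture at but do not justify. Finally, the regularity of $F_{\mu\nu}$ asserted in the theorem cannot be read off from $F_{\mu\nu}=\partial_\mu A_\nu-\partial_\nu A_\mu$ alone, since $r<s$; the paper proves it by estimating $\square F_{\mu\nu}$ through the null forms $\mathcal B_{\pm_1,\pm_2}$ and $\mathcal C_{\pm_1,\pm_2}$ and by separately checking $F_{\mu\nu}(0)\in H^{s-1}$ and $\partial_tF_{\mu\nu}(0)\in H^{s-2}$ (with the $|\nabla|^{-\epsilon}$ variants for $n=2$), the latter requiring genuine bilinear estimates on the data via (1) and (5). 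These verifications are absent from your outline.
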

\noindent{\bf Remarks:} 
\begin{itemize}
\item We immediately obtain $\phi \in C^0([0,T],H^s)$ and $F_{\mu \nu} \in C^0([0,T],H^{s-1})$ for n=3 and $|\nabla|^{-\epsilon} F_{\mu \nu} \in C^0([0,T],H^{s-1+\epsilon})$ for n=2. 
\item As usual for solutions constructed by Banach's fixed point theorem in $X^{s,b}$-type spaces we also have persistence of higher regularity, i.e. $\delta > 0$ can be chosen arbitrarily in Theorem \ref{Theorem1}.
\end{itemize}

We can reformulate the system (\ref{1}),(\ref{2}) under the Lorenz condition (\ref{6}) as follows:
$$\square A_{\mu} = \partial^{\nu} \partial_{\nu} A_{\mu} = \partial^{\nu}(\partial^{\mu} A_{\nu} - F_{\mu \nu}) = -\partial^{\nu} F_{\mu \nu} = - j_{\mu} \, , $$
thus (using the notation $\partial = (\partial_0,\partial_1,...,\partial_n)$):
\begin{equation}
\label{16}
\square A = -Im (\phi \overline{\partial \phi}) - A|\phi|^2 =: N(A,\phi) 
\end{equation}
and
\begin{align*}
m^2 \phi & = D^{(A)}_{\mu} D^{(A)\mu} \phi = \partial_{\mu} \partial^{\mu} \phi -iA_{\mu} \partial^{\mu} \phi -i\partial_{\mu}(A^{\mu} \phi) - A_{\mu}A^{\mu} \phi \\
& = \square \phi - 2i A^{\mu} \partial_{\mu} \phi - A_{\mu} A^{\mu} \phi \,
\end{align*}
thus
\begin{equation}
\label{17}
(\square -m^2) \phi = 2i A^{\mu} \partial_{\mu} \phi + A_{\mu} A^{\mu} \phi =: M(A,\phi) \, .
\end{equation}
Conversely, if $\square A_{\mu} = -j_{\mu}$ and $F_{\mu \nu} := \partial_{\mu} A_{\nu} - \partial_{\nu} A_{\mu}$ and the Lorenz condition (\ref{6}) holds then
$$ \partial^{\nu} F_{\mu \nu} = \partial^{\nu}(\partial_{\mu} A_{\nu} - \partial_{\nu} A_{\mu}) = \partial_{\mu} \partial^{\nu} A_{\nu} - \partial^{\nu} \partial_{\nu} A_{\mu} = -\square A_{\mu} = j_{\mu} \, $$
thus (\ref{1}),(\ref{2}) is equivalent to (\ref{16}),(\ref{17}), if (\ref{3}),(\ref{4}) and (\ref{6}) are satisfied.

The paper is organized as follows: in chapter 2 we prove the null structure of $A^{\mu} \partial_{\mu} \phi$ and in the Maxwell part. In chapter 3 local well-posedness of (\ref{16}),(\ref{17}) is shown with regularity of $\phi$ and $A$ as specified in Theorem \ref{Theorem1}. This relies on the null structure of $A^{\mu} \partial_{\mu} \phi$ and the bilinear estimates in wave-Sobolev spaces by d'Ancona, Foschi and Selberg (\cite{AFS3} and \cite{AFS2}). In chapter 4 we show that the Maxwell-Klein-Gordon system is satisfied and $F_{\mu \nu}$ has the desired regularity properties using the null structure of the Maxwell part and again the bilinear estimates by d'Ancona, Foschi and Selberg.

\section{Null structure}
\noindent{\bf Null structure of $A_{\mu} \partial_{\mu}\phi$.} \\
Using the Riesz transform $R_k := (-\Delta)^{-\frac{1}{2}} \partial_k $ we have
$$ A^j = R^k(R_j A_k - R_k A_j) - R_j R_k A^k \, , $$
so that
\begin{align*}
A^{\mu} \partial_{\mu} \phi & = -A^0 \partial_0 \phi + A^j \partial_j \phi \\
& = (-A_0 \partial_0 \phi -(R_j R_k A^k)\partial^j \phi) + (R^k(R_j A_k - R_k A_j)\partial^j \phi) \\
& =: P_1 + P_2
\end{align*}
Now define
$$ Q_{jk}(\phi,\psi) := \partial_j \phi \partial_k \psi - \partial_k \phi \partial_j \psi \, , $$
so that
\begin{align*}
&\sum_{j,k} Q_{jk}(\phi,(-\Delta)^{-\frac{1}{2}}(R_j A_k - R_k A_j)) \\
& = \sum_{j,k} [\partial_j \phi \partial_k((-\Delta)^{-\frac{1}{2}}(R_j A_k - R_k A_j)) - \partial_k \phi \partial_j((-\Delta)^{-\frac{1}{2}}(R_j A_k - R_k A_j))] \\
& = \sum_{j,k} [\partial_j \phi (-\Delta)^{-1}(\partial_k \partial_j A_k - \partial_k^2 A_j) - \partial_k \phi(-\Delta)^{-1}(\partial_j^2 A_k - \partial_j \partial_k A_j)] \\
&=2(\sum_{j,k} \partial_j \phi (-\Delta)^{-1} \partial_k \partial_j A_k + \sum_j \partial_j \phi A_j) \\
& = 2(\sum_{j,k} \partial_j \phi R_k R_j A_k - \sum_{j,k} \partial_j \phi R^k R_k A_j)\\
& = -2P_2 \, .
\end{align*}
We have
\begin{align*}
& Q_{jk}(\phi,(-\Delta)^{-\frac{1}{2}}(R_j A_k - R_k A_j)) \\
& = (-\Delta)^{-1} [\partial_k (\partial_j A_k - \partial_k A_j)\partial_j \phi - \partial_j(\partial_j A_k - \partial_k A_j)\partial_k \phi] \, ,
\end{align*}
so that the symbol of $P_2$ is bounded by
\begin{equation}
\label{2.0} \sum_{j,k} \left|\frac{1}{|\eta|^2}(\eta_k \zeta_j - \eta_j \zeta_k) (\eta_j - \eta_k) \right| \lesssim \sum_{j,k} \frac{|\eta_k \zeta_j - \eta_j \zeta_k|}{|\eta|} \lesssim |\zeta| \angle(\eta,\zeta) \, ,
\end{equation}
where $\angle(\eta,\zeta)$ denotes the angle between $\eta$ and $\zeta$.

Before we consider $P_1$ we define
$$ A_{\pm} := \frac{1}{2}(A \pm (i|\nabla|)^{-1} A_t) \, , $$
so that $A=A_+ + A_-$ and $A_t = i|\nabla|(A_+-A_-)$, and
$$\phi_{\pm} := \frac{1}{2}(\phi \pm (i \langle \nabla \rangle_m)^{-1} \phi_t) $$
with $\langle \nabla \rangle_m := (m^2-\Delta)^{\frac{1}{2}}$, so that $\phi = \phi_+ + \phi_-$ and $\phi_t = i \langle \nabla \rangle_m (\phi_+ - \phi_-)$. 

We transform (\ref{16}),(\ref{17}) into
\begin{align}
\label{2.1}
(i\partial_t \pm \langle \nabla \rangle_m) \phi_{\pm} & = -(\pm 2 \langle \nabla \rangle_m)^{-1} M(A,\phi) \\
\label{2.2}
(i\partial_t \pm |\nabla|) A_{\pm} & = -(\pm 2 |\nabla|)^{-1} N(A,\phi) \, ,
\end{align}
so that under the Lorenz condition $\partial_k A^k = \partial_t A^0$ we have
\begin{align*}
P_1 &= -A^0 \partial_0 \phi -(R_j R_k A^k)\partial^j \phi \\
& = -A^0 \partial_t \phi- (-\Delta)^{-1} \partial_j \partial_k A^k)\partial^j \phi \\
& = -A^0 \partial_t \phi - (-\Delta)^{-1} \nabla \partial_t A^0 \cdot \nabla \psi \, ,
\end{align*}
which implies
\begin{align}
\label{2.2a}
iP_1 & = (A_{0+}+A_{0-})\langle \nabla \rangle_m (\phi_+ - \phi_-) + |\nabla|^{-1} \nabla(A_{0+} - A_{0-}) \cdot \nabla(\phi_+ + \phi_-) \\
\nonumber
& = \sum_{\pm_1,\pm_2} \pm_2 \mathcal{A}_{(\pm_1,\pm_2)} (A_{0 \pm1},\phi_{\pm 2}) \, ,
\end{align}
where
$$ \mathcal{A}_{(\pm_1,\pm_2)}(f,g) := f \langle \nabla \rangle_m g + |\nabla|^{-1} \nabla(\pm_1 f) \cdot \nabla(\pm_2 g) \, . $$
Its symbol $a_{(\pm_1,\pm_2)} (\eta,\zeta)$ is bounded by the elementary estimate (\cite{ST}, Lemma 3.1):
\begin{equation}
\label{2.3}
|a_{(\pm_1,\pm_2)} (\eta,\zeta) | \sim \left|\langle \zeta \rangle_m - \frac{(\pm_1 \eta) \cdot (\pm_2 \zeta)}{|\eta|}\right| \lesssim m + |\zeta| \angle(\pm_1 \eta, \pm_2 \zeta) \,.
\end{equation}
 
We have shown that the quadratic term in the nonlinearity $M(A,\phi)$ in the  equation (\ref{17}) for $\phi$ fulfills a null condition. \\[0.5em]
{\bf Null structure in the Maxwell part.}\\
We start from Maxwell's equations (\ref{1}), i.e. $-\partial^0 F_{l0} + \partial^k F_{lk} = j_l$ and $\partial^k F_{0k} = j_0$ and obtain
\begin{align}
\nonumber
\square F_{k0} & = -\partial_0(\partial_0 F_{k0}) + \partial^l \partial_l F_{k0} \\ 
\nonumber
& = -\partial_0(\partial^l F_{kl} - j_k) + \partial^l \partial_l F_{k0} \\
\nonumber
& = -\partial^l \partial_0(\partial_k A_l - \partial_l A_k) + \partial_0 j_k + \partial^l \partial_l F_{k0} \\
\nonumber
& = -\partial^l[\partial_k(\partial_0 A_l - \partial_l A_0) - \partial_l(\partial_0 A_k - \partial_k A_0)] + \partial_0 j_k + \partial^l \partial_l F_{k0} \\
\nonumber
& = -\partial^l \partial_k F_{0l} + \partial^l \partial_l F_{0k} + \partial_0 j_k + \partial^l \partial_l F_{k0} \\
\nonumber
& =  -\partial^l \partial_k F_{0l} +\partial_0 j_k \\
\label{2.10}
& = -\partial_k j_0 + \partial_0 j_k
\end{align}
and
\begin{align}
\nonumber
\square F_{kl} & = -\partial_0 \partial_0 F_{kl} + \partial^m \partial_m F_{kl} \\\nonumber
& = -\partial_0 \partial_0 (\partial_k A_l - \partial_l A_k) + \partial^m \partial_m F_{kl} \\\nonumber
& = -\partial_0 \partial_k(\partial_0 A_l - \partial_l A_0) + \partial_0 \partial_l (\partial_0 A_k - \partial_k A_0) + \partial^m \partial_m F_{kl} \\\nonumber
& = -\partial_0 \partial_k F_{0l} + \partial_0 \partial_l F_{0k} + \partial^m \partial_m F_{kl} \\\nonumber
& = \partial_k \partial_0 F_{l0} - \partial_l \partial_0 F_{k0} + \partial^m \partial_m F_{kl} \\\nonumber
& = \partial_k (\partial^m F_{lm} - j_l) - \partial_l(\partial^m F_{km} - j_k)+ \partial^m \partial_m F_{kl} \\\nonumber
& = \partial_k \partial^m F_{lm} - \partial_l \partial^m F_{km} + \partial^m \partial_m F_{kl} +\partial_l j_k - \partial_k j_l \\\nonumber
& = \partial_k \partial^m(\partial_l A_m - \partial_m A_l) - \partial_l \partial^m(\partial_k A_m - \partial_m A_k)+ \partial^m \partial_m F_{kl} +\partial_l j_k - \partial_k j_l \\\nonumber
& = \partial^m \partial_m (\partial_l A_k - \partial_k A_l) + \partial^m \partial_m F_{kl} +\partial_l j_k - \partial_k j_l \\\nonumber
& = \partial^m \partial_m F_{lk} + \partial^m \partial_m F_{kl} +\partial_l j_k - \partial_k j_l \\
\label{2.11}
&   = \partial_l j_k - \partial_k j_l \, .
\end{align}
By the definition ({5}) of $j_{\mu}$ we obtain
\begin{align}
\nonumber
\partial_0 j_k - \partial_k j_0 
 =& \, Im(\partial_0 \phi \overline{\partial_k \phi}) + Im(\phi \overline{\partial_0 \partial_k \phi}) + \partial_0(A_k |\phi|^2) \\
 \nonumber
&- Im(\partial_k \phi \overline{\partial_0 \phi}) - Im(\phi \overline{\partial_k \partial_0 \phi}) - \partial_k(A_0 |\phi|^2) \\
\label{2.10a}
 =&\, Im(\partial_t \phi\overline{\partial_k \phi} - \partial_k \phi \overline{\partial_t \phi}) + \partial_t(A_k |\phi|^2) - \partial_k(A_0 |\phi|^2)
\end{align}
and
\begin{align}
\nonumber
\partial_l j_k - \partial_k j_l 
 =& \, Im(\partial_l \phi \overline{\partial_k \phi}) + Im(\phi \overline{\partial_l \partial_k \phi}) + \partial_l(A_k |\phi|^2) \\
 \nonumber
&- Im(\partial_k \phi \overline{\partial_l \phi}) - Im(\phi \overline{\partial_k \partial_l \phi}) - \partial_k(A_l |\phi|^2) \\
\label{2.11b}
 =&\, Im(\partial_l \phi\overline{\partial_k \phi} - \partial_k \phi \overline{\partial_l \phi}) + \partial_l(A_k |\phi|^2) - \partial_k(A_l |\phi|^2) \, .
\end{align}
After the decomposition $\phi = \phi_+ + \phi_-$ we have
\begin{equation}
\label{2.11c}
\partial_l \phi\overline{\partial_k \phi} - \partial_k \phi \overline{\partial_l \phi} = \sum_{\pm_1,\pm_2} \mathcal{C}_{\pm_1,\pm_2} (\phi_{\pm_1,\phi_{\pm_2}})                            \, , \end{equation}
where
\begin{equation}
\label{2.12}
\mathcal{C}_{\pm_1,\pm_2} (f,g) := \partial_l f \overline{\partial_k g} - \partial_k f  \overline{\partial_l g} \, .
\end{equation}
Its symbol
\begin{equation}
c_{\pm_1,\pm_2}(\eta,\zeta) = \eta_l \zeta_k - \eta_k \zeta_l
\label{2.13}
\end{equation}
fulfills
\begin{equation}
\label{2.14}
|c_{\pm_1,\pm_2} (\eta,\zeta) = |(\pm_1 \eta_l)(\pm_2 \zeta_k) - (\pm_1 \eta_k)(\pm_2 \zeta_l)| \lesssim |\eta||\zeta| \angle(\pm_1 \eta,\pm_2 \zeta) \, . 
\end{equation}
Similarly using $\partial_t \phi = i \langle \nabla \rangle_m (\phi_+ - \phi_-)$ we have
\begin{equation}
\label{2.10b}
\partial_t \phi\overline{\nabla \phi} - \nabla \phi \overline{\partial_t \phi} = \sum_{\pm_1,\pm_2} (\pm_1 1)(\pm_2 1)\mathcal{B}_{\pm_1,\pm_2} (\phi_{\pm_1,\phi_{\pm_2}})                            \, , \end{equation}
where
\begin{equation}
\label{2.15}
\mathcal{B}_{\pm_1,\pm_2} (f,g) := i(\langle \nabla \rangle_m f \overline{\nabla (\pm_2 g)} - \nabla (\pm_1 f)  \overline{\langle \nabla \rangle_m g}) \, .
\end{equation}
Its symbol
\begin{equation}
b_{\pm_1,\pm_2}(\eta,\zeta) = \langle \eta \rangle_m (\pm_2 \zeta) - \langle\zeta \rangle_m (\pm_1 \eta)
\label{2.16}
\end{equation}
can be estimated elementarily (\cite{ST}, Lemma 3.2):
\begin{equation}
\label{2.17}
|b_{\pm_1,\pm_2} (\eta,\zeta)| \lesssim m(|\eta| + |\zeta|) + |\eta||\zeta| \angle(\pm_1 \eta,\pm_2 \zeta) \, . 
\end{equation}
We have shown that the quadratic terms in the equations (\ref{2.10}) and (\ref{2.11}) for $F_{k0}$ and $F_{kl}$ fulfill a null condition.

\section{Local well-posedness}
Recall $\phi_{\pm}=\frac{1}{2}(\phi \pm (i\langle \nabla \rangle_m)^{-1}\phi_t)$ , so that $\phi=\phi_+ + \phi_-$ and $ \partial_t \phi = i \langle \nabla \rangle_m(\phi_+ - \phi_-)$, and $A_{\pm} = \frac{1}{2}(A \pm (i|\nabla|)^{-1} A_t)$ so that $A=A_+ + A_-$ and $\partial_t A = i|\nabla|(A_+-A_-)$, we write (\ref{2.1}),(\ref{2.2}) as follows:
\begin{align}
\label{3.1}
(i\partial_t \pm \langle \nabla \rangle_m) \phi_{\pm} & = - (\pm 2 \langle \nabla \rangle_m)^{-1} \mathcal{M}(\phi_+,\phi_-,A_+,A_-) \\
\label{3.2}
(i\partial_t \pm |\nabla|)A_{\pm} & = -(\pm 2|\nabla|)^{-1} \mathcal{N}(\phi_+,\phi_-,A_+,A_-) \, ,
\end{align}
where
\begin{align}
\label{3.3}
\mathcal{M}(\phi_+,\phi_-,A_+,A_-) & =2 \sum_{\pm_1,\pm_2} \pm_2 \mathcal{A}_{(\pm_1,\pm_2)} (A_{0\pm 1},\phi_{\pm 2}) + 2iP_2 + A_{\mu}A^{\mu} \phi \\
\label{3.4}
\mathcal{N}_0(\phi_+,\phi_-,A_+,A_-) & = Im(\phi i \langle \nabla \rangle_m(\overline{\phi}_+ - \overline{\phi}_-)) - A_0 |\phi|^2 \\
\label{3.5}
\mathcal{N}_j(\phi_+,\phi_-,A_+,A_-) & = -Im(\phi \overline{\partial_j \phi}) -A_j |\phi|^2 \, .
\end{align}
The initial data are
\begin{align}
\label{3.5a}
\phi_{\pm}(0) & = \frac{1}{2}(\phi_0 \pm (i \langle \nabla \rangle_m)^{-1} \phi_1) \\
\label{3.5b}
A_{0\pm}(0)& = \frac{1}{2}(a_{00}\pm(i|\nabla|^{-1})\dot{a}_{00}) = 0 \\
\label{3.5c}
A_{j\pm}(0) & = \frac{1}{2}(a_{0j} \pm (i|\nabla|)^{-1} \dot{a}_{0j}) \, .
\end{align}
(\ref{3.5b}) follows from (\ref{10}). From (\ref{7}) we have $\phi_{\pm}(0) \in H^s$, and from (\ref{11}) we have for $r\le s$ in the case $n=3$: $\nabla a_{0j} \in H^{r-1}$ , $\dot{a}_{0j} \in H^{r-1}$, so that $\nabla A_{j\pm}(0) \in H^{r-1}$, whereas in the case $n=2$ we have $|\nabla|^{1-\epsilon} a_{0j} \in H^{r-1+\epsilon}$ , $|\nabla|^{-\epsilon} \dot{a}_{0j} \in H^{r-1+\epsilon}$, so that $|\nabla|^{1-\epsilon} A_{j\pm}(0) \in H^{r-1+\epsilon}$.

We split $A_{\pm} = A_{\pm}^{hom} + A_{\pm}^{inh}$ into its homogeneous and inhomogeneous part, where $(i\partial_t \pm|\nabla|)A_{\pm}^{hom} =0$ with data as in (\ref{3.5b}) and (\ref{3.5c}) and $A_{\pm}^{inh}$ is the solution of (\ref{3.2}) with zero data. By the linear theory we obtain for $b>1/2$:
$$ \|\phi_{\pm}^{hom}\|_{X^{s,b}_{\pm}} \lesssim \|\phi_{\pm}(0)\|_{H^s}$$
and for $\beta > 1/2$:
$$ \| |\nabla| A_{\pm}^{hom} \|_{X_{\pm}^{r-1,\beta}} \lesssim \| |\nabla| A_{\pm}(0)\|_{H^{r-1}} \quad \mbox{for} \, n=3 \, $$
$$\| |\nabla|^{1-\epsilon} A_{\pm}^{hom} \|_{X_{\pm}^{r-1+\epsilon,\beta}} \lesssim \| |\nabla|^{1-\epsilon} A_{\pm}(0)\|_{H^{r-1+\epsilon}} \quad \mbox{for} \, n=2 \, . $$

Our aim is to show the following local well-posedness result:
\begin{theorem}
\label{Theorem2}
Assume
\begin{equation}
\label{3.9}
s= \frac{3}{4} + \delta \, , \, r = \frac{1}{2} + \delta \,\,  \mbox{for}\, \, n=3 \,\, \mbox{and}\, \, s= \frac{3}{4} + \delta \, , \, r = \frac{1}{4} + \delta \, \, \mbox{for} \,\, n=2 \, ,
\end{equation}
where $\delta >0$ is a small number. The system (\ref{3.1}),(\ref{3.2}) for data $\phi_{\pm}(0) \in H^s$ and $|\nabla| A_{\pm}(0) \in H^{r-1}$ for $n=3$ and $|\nabla|^{1-\epsilon} A_{\pm}(0) \in H^{r-1+\epsilon}$ for $n=2$ has a unique local solution $\phi_{\pm} \in X_{\pm}^{s,\frac{1}{2}+}[0,T]$ and $|\nabla| A_{\pm}^{hom} \in X_{\pm}^{r-1,1-\epsilon_0}[0,T]$ , $A_{\pm}^{inh} \in X_{\pm}^{r,1-\epsilon_0}[0,T]$ for $n=3$, and $|\nabla|^{1-\epsilon} A_{\pm}^{hom} \in X_{\pm}^{r-1+\epsilon,1-\epsilon_0}[0,T]$ , $|\nabla|^{\epsilon_1} A_{\pm}^{inh}  \in $ \\ $X_{\pm}^{r-\epsilon_1,1-\epsilon_0}[0,T]$ for $n=2$, where $\epsilon,\epsilon_0,\epsilon_1 > 0$ are small enough.
\end{theorem}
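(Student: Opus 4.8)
The plan is to run Banach's fixed point theorem for the system \eqref{3.1}, \eqref{3.2} in the product space $\prod_{\pm} X_{\pm}^{s,\frac{1}{2}+}[0,T]$ for $\phi_{\pm}$, together with the split spaces for $A_{\pm}^{hom}$ and $A_{\pm}^{inh}$ as specified in the statement. First I would set up the iteration: write each solution as a Duhamel term applied to the nonlinearities $\mathcal{M}$, $\mathcal{N}_{\mu}$, using the standard $X^{s,b}$ energy inequality $\|u\|_{X^{s,b}_{\pm}[0,T]} \lesssim \|u(0)\|_{H^s} + T^{\theta}\|F\|_{X^{s,b-1+\theta}_{\pm}[0,T]}$ for $b>\tfrac12$ and a small $\theta>0$, and similarly for the $A$-equations with exponent $1-\epsilon_0$ in place of $\tfrac12+$; the homogeneous parts $\phi_{\pm}^{hom}$, $A_{\pm}^{hom}$ are controlled directly by the linear estimates already recorded in the excerpt. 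The contraction then reduces entirely to multilinear estimates in wave-Sobolev spaces for the three types of terms appearing in $\mathcal{M}$ and $\mathcal{N}$.

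The core estimates are: (i) for the null term $\mathcal{A}_{(\pm_1,\pm_2)}(A_{0\pm_1},\phi_{\pm_2})$ in $\mathcal{M}$, use the symbol bound \eqref{2.3}, $|a_{(\pm_1,\pm_2)}(\eta,\zeta)|\lesssim m+|\zeta|\angle(\pm_1\eta,\pm_2\zeta)$, to trade the angle for a gain of derivatives distributed among the three hyperbolic weights $\langle\tau\pm|\xi|\rangle$ via the standard angle-vs-modulation inequality, reducing to a product estimate $H^{s,b}\cdot H^{r-?,b'}\hookrightarrow H^{s-1,b''}$ of the type catalogued by d'Ancona–Foschi–Selberg (\cite{AFS3} for $n=3$, \cite{AFS2} for $n=2$); (ii) for $P_2$, use the symbol bound \eqref{2.0}, $\lesssim|\zeta|\angle(\eta,\zeta)$, which is of the same null type and handled identically; (iii) for the cubic terms $A_\mu A^\mu\phi$, split into $A^{hom}$ and $A^{inh}$ pieces and apply purely algebraic product estimates with no null structure needed, since $r>n/4$ leaves room. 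On the Maxwell side \eqref{3.4}, \eqref{3.5}, the outputs $|\nabla|^{-1}\mathcal{N}$ and $|\nabla|^{-\epsilon}$-weighted versions for $n=2$ must land in $X^{r,1-\epsilon_0}$ resp. $X^{r-1,1-\epsilon_0}$ (and the $n=2$ analogues); here the quadratic terms $\mathrm{Im}(\phi\,\overline{\partial_j\phi})$ and $\mathrm{Im}(\phi\,i\langle\nabla\rangle_m(\overline{\phi}_+-\overline{\phi}_-))$ are again estimated without null structure (they are "low output regularity" because of the $|\nabla|^{-1}$), while the genuinely dangerous terms would be those feeding into $F_{\mu\nu}$, deferred to chapter 4; the cubic $A_j|\phi|^2$ is routine.

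I expect the main obstacle to be the bookkeeping of the two different regularity scales for $A$ — the homogeneous part at level $r-1$ (one derivative worse, coming from $a_{0j}=(-\Delta)^{-1}\partial^k F^0_{jk}$) versus the inhomogeneous part at level $r$, and in $n=2$ the further $|\nabla|^{\epsilon_1}$-twist — so that when $A$ appears inside a nonlinearity one must use whichever component gives the usable estimate, and verify that in every combination the d'Ancona–Foschi–Selberg admissibility conditions on the triple $(s_0,s_1,s_2;b_0,b_1,b_2)$ are met with the chosen $s=\tfrac34+\delta$, $r=\tfrac12+\delta$ (resp. $\tfrac14+\delta$). The delicate case is the null term $\mathcal{A}$ when one input is the low-regularity $A^{hom}$ at a high modulation: one must check the angle gain is genuinely enough to absorb the derivative loss while keeping all three $b$-exponents admissible, in particular that the $b$-exponent $1-\epsilon_0$ on the $A$-factor (rather than $\tfrac12+$) does not break the product estimate. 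A secondary point is choosing $\epsilon,\epsilon_0,\epsilon_1$ and $\delta$ in the right order so that all the strict inequalities in the DFS estimates hold simultaneously; this is a finite chase but needs care. Uniqueness and persistence of higher regularity follow by the usual $X^{s,b}$ arguments once the contraction estimates are in hand.
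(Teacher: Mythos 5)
Your plan is correct and follows essentially the same route as the paper: contraction mapping reduced to the nonlinear estimates for $\mathcal{M}$ and $\mathcal{N}$, with the null symbol bounds \eqref{2.3} and \eqref{2.0} converted into modulation gains by the angle lemma and then fed into the d'Ancona--Foschi--Selberg product estimates, the cubic and Maxwell-side terms handled by direct product/Strichartz estimates without null structure. The only organizational difference is that the paper closes the iteration uniformly in the weaker norm $\||\nabla|A_\pm\|_{X^{r-1,1-\epsilon_0}_\pm}$ (resp.\ its $n=2$ analogue) and isolates the resulting low-spatial-frequency degeneracy of $A$ by separate Sobolev/Strichartz arguments, rather than tracking the $A^{hom}$/$A^{inh}$ components separately inside the nonlinearity; also note your heuristic ``$r>n/4$'' for the cubic term is false for the stated $r$, though the conclusion it is meant to support is still correct since the required output regularity $s-1$ is very low.
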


Fundamental for its proof are the following bilinear estimates in wave-Sobolev spaces which were proven by d'Ancona, Foschi and Selberg in the cases $n=2$ in \cite{AFS2} and $n=3$ in \cite{AFS3} in a more general form which include many limit cases which we do not need.
\begin{theorem}
\label{Theorem3}
Let $n=2$ or $n=3$. The estimate
$$\|uv\|_{X_{\pm}^{-s_0,-b_0}} \lesssim \|u\|_{X^{s_1,b_1}_{\pm_1}} \|v\|_{X^{s_2,b_2}_{\pm_2}} $$
holds, provided the following conditions hold:
\begin{align}
\nonumber
& b_0,b_1,b_2 \ge 0 \\
\nonumber
& b_0 + b_1 + b_2 > \frac{1}{2} \\
\nonumber
& b_0 + b_1 > 0 \\
\nonumber
& b_0 + b_2 > 0 \\
\nonumber
& b_1 + b_2 > 0 \\
\nonumber
&s_0+s_1+s_2 > \frac{n+1}{2} -(b_0+b_1+b_2) \\
\nonumber
&s_0+s_1+s_2 > \frac{n}{2} -(b_0+b_1) \\
\nonumber
&s_0+s_1+s_2 > \frac{n}{2} -(b_0+b_2) \\
\nonumber
 &s_0+s_1+s_2 > \frac{n}{2} -(b_1+b_2)\\
\nonumber
&s_0+s_1+s_2 > \frac{n-1}{2} - b_0 \\
\nonumber
&s_0+s_1+s_2 > \frac{n-1}{2} - b_1 \\
\nonumber
&s_0+s_1+s_2 > \frac{n-1}{2} - b_2 \\
\nonumber
&s_0+s_1+s_2 > \frac{n+1}{4} \\
 \label{5.13}
&(s_0 + b_0) +2s_1 + 2s_2 > \frac{n}{2} \\
\nonumber
&2s_0+(s_1+b_1)+2s_2 > \frac{n}{2} \\
\nonumber
&2s_0+2s_1+(s_2+b_2) > \frac{n}{2} \\
\nonumber
&s_1 + s_2 > 0 \\
\nonumber
&s_0 + s_2 > 0 \\
\nonumber
&s_0 + s_1 > 0 \, .
\end{align}
If $n=3$ the condition (\ref{5.13}) is only necessary in the case when $\langle \xi_0 \rangle \lesssim \langle \xi_1 \rangle \sim \langle \xi_2 \rangle$ and also $\pm_1$ and $\pm_2$ are different signs. Here $\xi_0$,$\xi_1$ and $\xi_2$ denote the spatial frequencies of $\widehat{uv}$,$\widehat{u}$ and $\widehat{v}$, respectively.
\end{theorem}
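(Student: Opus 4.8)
\noindent\emph{Proof plan.} By duality and the usual conjugation symmetry of the wave--Sobolev spaces, the asserted estimate is equivalent to a trilinear space--time bound
$$ \Big| \int_{\mathbb{R}^{1+n}} u_0\, u_1\, u_2 \, dt\,dx \Big| \;\lesssim\; \prod_{j=0}^{2} \|u_j\|_{X^{s_j,b_j}_{\pm_j}} , $$
where $u_1,u_2$ are as in the statement and $u_0$ ranges over the space dual to $X^{-s_0,-b_0}_{\pm}$, which is again of the form $X^{s_0,b_0}_{\pm_0}$ for an appropriate sign $\pm_0$. Passing to Fourier variables and writing $f_j(\tau,\xi) := \langle \xi\rangle^{s_j}\langle \tau \pm_j |\xi|\rangle^{b_j}\,|\widehat{u_j}(\tau,\xi)| \ge 0$, the estimate becomes the trilinear convolution bound
$$ \int \frac{f_0(X_0)\,f_1(X_1)\,f_2(X_2)}{\prod_{j} \langle \xi_j\rangle^{s_j}\,\langle \sigma_j\rangle^{b_j}}\, \delta(X_0-X_1-X_2)\, dX_1\, dX_2 \;\lesssim\; \prod_j \|f_j\|_{L^2} , $$
with $X_j=(\tau_j,\xi_j)$ and $\sigma_j := \tau_j \pm_j |\xi_j|$. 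The algebraic heart of the matter is the resonance identity: since $\tau_0=\tau_1+\tau_2$ and $\xi_0=\xi_1+\xi_2$ on the support, the quantity $\mathfrak{h} := \sigma_0-\sigma_1-\sigma_2 = \pm_0|\xi_0|\mp_1|\xi_1|\mp_2|\xi_2|$ depends only on the spatial frequencies, and by the law of cosines $|\mathfrak{h}|$ is comparable to $\min_j|\xi_j|\cdot\angle(\pm_1\xi_1,\pm_2\xi_2)^2$ in the ``diagonal'' regime and to $\max_j|\xi_j|$ when the signs or sizes are transversal; in particular $\max(\langle\sigma_0\rangle,\langle\sigma_1\rangle,\langle\sigma_2\rangle) \gtrsim |\mathfrak{h}|$.

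Next I would run a dyadic decomposition $\langle\xi_j\rangle\sim N_j$, $\langle\sigma_j\rangle\sim L_j$ and organize a case analysis by (i) which modulation $L_j$ is largest and (ii) the configuration of the $N_j$ (the relevant cases being $N_{\min}\ll N_{\mathrm{med}}\sim N_{\mathrm{max}}$ and $N_0\sim N_1\sim N_2$). On each block one places the dominant modulation weight on whichever exponent needs it and estimates the remaining bilinear interaction in $L^2$ by one of the standard building blocks: (a) the elementary $L^2$ bilinear bound obtained by estimating the measure of the intersection of two thickened light cones (Foschi--Klainerman type); (b) the refined $L^2$ bilinear estimates exploiting angular/transversal separation of the two input cones; and (c) the $L^4_{t,x}$ Strichartz estimate for the wave equation (with the derivative loss proper to $n=2$), supplemented for $n=3$ by energy- and $L^2_tL^\infty_x$-type bounds. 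Each block then contributes $\lesssim C(N_0,N_1,N_2,L_0,L_1,L_2)\prod_j\|P_{N_j}Q_{L_j}f_j\|_{L^2}$, and the numerous arithmetic constraints in the statement are precisely those making $\sum C(\cdots)$ a convergent geometric series in every case (summing the $L_j$ first and the $N_j$ last).

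The main obstacle is the \emph{resonant} interaction $N_0\ll N_1\sim N_2$ with $\pm_1\neq\pm_2$ --- exactly the regime flagged in the theorem as the one where (\ref{5.13}) is needed for $n=3$, and the analogous worst case for $n=2$. There $|\mathfrak{h}|$ can be as small as $N_0\,\angle(\pm_1\xi_1,\pm_2\xi_2)^2$ (or smaller still), so no modulation is forced to be large and a crude Cauchy--Schwarz loses. One must instead decompose the angle $\angle(\pm_1\xi_1,\pm_2\xi_2)$ dyadically, observe that the output is then supported in a slab/plank of controlled dimensions, and trade the resulting angular gain against whichever modulation gain is available; this is the sharp null-form bilinear machinery, and it is where the extra condition (\ref{5.13}) is consumed. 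The remaining work is careful bookkeeping --- checking that each of the threshold lines $\tfrac{n+1}{2}$, $\tfrac n2$, $\tfrac{n-1}{2}$ and $\tfrac{n+1}{4}$ is saturated by a single block configuration so that no logarithmic divergence survives --- which is harmless in the applications below, since there the inequalities are strict with room to spare.
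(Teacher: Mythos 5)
You should first be aware that the paper does not prove this theorem at all: it is quoted verbatim (in slightly simplified form) from d'Ancona--Foschi--Selberg, \cite{AFS2} for $n=2$ and \cite{AFS3} for $n=3$, so there is no internal proof to compare your argument against. Judged on its own terms, your proposal is an accurate high-level description of the method used in those references: duality to a trilinear convolution estimate, the resonance identity $\sigma_0-\sigma_1-\sigma_2=\pm_0|\xi_0|\mp_1|\xi_1|\mp_2|\xi_2|$ together with $\max_j\langle\sigma_j\rangle\gtrsim|\mathfrak{h}|$, dyadic decomposition in both frequency and modulation, and an angular decomposition in the resonant regime $\langle\xi_0\rangle\ll\langle\xi_1\rangle\sim\langle\xi_2\rangle$ with $\pm_1\neq\pm_2$, which is indeed exactly where condition (\ref{5.13}) is consumed.

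However, as a proof it has a genuine gap rather than a mere omission of routine detail: the entire quantitative content of the theorem is deferred. The statement is a sharp ``atlas'' of nineteen conditions, each of which corresponds to a specific dyadic configuration in which the sum would otherwise diverge, and the assertion that ``the numerous arithmetic constraints in the statement are precisely those making $\sum C(\cdots)$ a convergent geometric series in every case'' is the conclusion, not an argument for it. To close the proof you would need (i) the explicit block estimates $C(N_0,N_1,N_2,L_0,L_1,L_2)$ from the Foschi--Klainerman-type bilinear bounds, the transversal/angular refinements, and the dimension-dependent Strichartz inputs (these differ materially between $n=2$ and $n=3$, which is why (\ref{5.13}) is needed more broadly in two dimensions); (ii) the verification, configuration by configuration, that summing the $L_j$ and then the $N_j$ under the stated strict inequalities yields a convergent series; and (iii) the argument showing why (\ref{5.13}) can be dropped for $n=3$ outside the flagged regime. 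None of this is sketched beyond naming the tools, so the proposal cannot be upgraded to a proof without essentially reproducing the cited papers. In the context of the present paper the correct move is simply to cite \cite{AFS2,AFS3}, as the author does.
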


The second basic fact used in the sequel is the following elementary estimate for the angle between two vectors.
\begin{lemma}
\label{Lemma} Assume $0 \le \alpha \le \frac{1}{2}$.
For arbitrary signs $\pm,\pm_1,\pm_2$ the following estimate holds:
\begin{equation}
\label{angle1}
\angle (\pm_1 \eta,\pm_2(\xi - \eta))  \lesssim \frac{\langle \tau \pm |\xi| \rangle^{\frac{1}{2}-\alpha}}{\min(|\eta|, |\eta - \xi|)^{\frac{1}{2}-\alpha}}   + \frac{ \langle \lambda \pm_1 |\eta| \rangle^{\frac{1}{2}} 
+ \langle \tau - \lambda \pm_2 |\xi - \eta |\rangle^{\frac{1}{2}}}{\min(|\eta|,|\eta-\xi|)^{\frac{1}{2}}}
\end{equation}
In the case of different signs $\pm_1$ and $\pm_2$ the following (improved) estimate holds:
\begin{align}
\nonumber
\angle (\pm_1 \eta,\pm_2(\xi - \eta)) & \lesssim \frac{|\xi|^{\frac{1}{2}}}{|\eta|^{\frac{1}{2}} |\eta - \xi|^{\frac{1}{2}}} \big( \min(|\eta|,|\eta - \xi|)^{\alpha} \langle \tau \pm |\xi| \rangle^{\frac{1}{2}-\alpha} + \langle \lambda \pm_1 |\eta| \rangle^{\frac{1}{2}} \\
\nonumber
& \hspace{5em} + \langle \tau - \lambda \pm_2 |\xi - \eta |\rangle^{\frac{1}{2}}\big) \\
\label{angle2}
& \lesssim \frac{|\xi|^{\frac{1}{2}}}{|\eta|^{\frac{1}{2}} |\eta - \xi|^{\frac{1}{2}}} \min(|\eta|,|\eta - \xi|)^{\alpha} \langle \tau \pm |\xi| \rangle^{\frac{1}{2}-\alpha} \\ \nonumber
&
\hspace{5em} + \frac{\langle \lambda \pm_1 |\eta| \rangle^{\frac{1}{2}} + \langle \tau - \lambda \pm_2 |\xi - \eta \rangle^{\frac{1}{2}}}{\min(|\eta|,|\eta - \xi|)^{\frac{1}{2}}} \, .
\end{align}
\end{lemma}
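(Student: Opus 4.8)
The approach is the standard one for null-form angle estimates: apply the law of cosines to the frequency triangle $\xi = \eta + (\xi-\eta)$ and control the resulting combinations of the moduli $|\xi|,|\eta|,|\xi-\eta|$ by the three wave modulations. Throughout write $M := \min(|\eta|,|\eta-\xi|)$ and abbreviate $\theta_0 := \tau \pm |\xi|$, $\theta_1 := \lambda \pm_1 |\eta|$, $\theta_2 := \tau - \lambda \pm_2 |\xi-\eta|$ for the modulations appearing on the right-hand sides of (\ref{angle1}) and (\ref{angle2}). Since replacing a vector by its negative turns an angle into its supplement, $\angle(\pm_1\eta,\pm_2(\xi-\eta))$ equals $\angle(\eta,\xi-\eta)$ when $\pm_1=\pm_2$ and equals $\pi-\angle(\eta,\xi-\eta)$ when $\pm_1\ne\pm_2$, so it suffices to bound these two quantities.

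First I record the trigonometric input. Starting from $|\xi|^2 = |\eta|^2+|\xi-\eta|^2+2|\eta||\xi-\eta|\cos\angle(\eta,\xi-\eta)$, using the elementary bounds $\theta^2\lesssim 1-\cos\theta$ and $(\pi-\theta)^2\lesssim 1+\cos\theta$ valid for $\theta\in[0,\pi]$, and the comparisons $|\eta|+|\xi-\eta|+|\xi|\sim\max(|\eta|,|\xi-\eta|)$ and $|\xi|+\bigl||\eta|-|\xi-\eta|\bigr|\sim|\xi|$ (both from the triangle inequality), one obtains
$$\angle(\eta,\xi-\eta)^2\lesssim\frac{|\eta|+|\xi-\eta|-|\xi|}{M}\,,\qquad \bigl(\pi-\angle(\eta,\xi-\eta)\bigr)^2\lesssim\frac{|\xi|\bigl(|\xi|-\bigl||\eta|-|\xi-\eta|\bigr|\bigr)}{|\eta||\xi-\eta|}\,.$$
The triangle inequality also yields the sign-free bounds $|\eta|+|\xi-\eta|-|\xi|\le 2M$ and $|\xi|-\bigl||\eta|-|\xi-\eta|\bigr|\le 2M$.

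The arithmetic core is to bound these numerators by the modulations. Writing $\theta_0-\theta_1-\theta_2 = \pm|\xi|\mp_1|\eta|\mp_2|\xi-\eta|$, a short case check on the signs shows: if $\pm_1=\pm_2$ then $|\eta|+|\xi-\eta|-|\xi|\lesssim\langle\theta_0\rangle+\langle\theta_1\rangle+\langle\theta_2\rangle$ (when the relevant $\pm$-combination comes out with a useful sign it equals $\pm(\theta_0-\theta_1-\theta_2)$; in the degenerate case where it equals $-(|\eta|+|\xi-\eta|+|\xi|)$ one instead bounds $|\eta|+|\xi-\eta|-|\xi|\le|\eta|+|\xi-\eta|+|\xi|$ directly); and if $\pm_1\ne\pm_2$ then, using precisely that the two signs differ, $|\xi|-\bigl||\eta|-|\xi-\eta|\bigr|\lesssim\langle\theta_0\rangle+\langle\theta_1\rangle+\langle\theta_2\rangle$. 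Combining each of these with the corresponding trivial bound $\lesssim M$ and interpolating, for $0\le\alpha\le\frac12$ the numerator in question is $\lesssim M^{2\alpha}\langle\theta_0\rangle^{1-2\alpha}+\langle\theta_1\rangle+\langle\theta_2\rangle$.

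Substituting into the two displayed inequalities and taking square roots then finishes the proof. In the equal-sign case, dividing through by $M$ gives exactly (\ref{angle1}). In the unequal-sign case, dividing by $|\eta||\xi-\eta|$ and factoring out $\frac{|\xi|^{1/2}}{|\eta|^{1/2}|\xi-\eta|^{1/2}}$ gives the first displayed line of (\ref{angle2}); since $|\xi|\le|\eta|+|\xi-\eta|\le 2\max(|\eta|,|\xi-\eta|)$ one has $\frac{|\xi|^{1/2}}{|\eta|^{1/2}|\xi-\eta|^{1/2}}\lesssim M^{-1/2}$, which converts the first line into the second, and since that bound dominates the one in (\ref{angle1}) it also settles (\ref{angle1}) when $\pm_1\ne\pm_2$. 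I expect the only genuine (though entirely elementary) obstacle to be the sign bookkeeping in the arithmetic step — keeping straight which of the several $(\pm,\pm_1,\pm_2)$-configurations produces a wrong-sign combination that must be handled by the crude triangle-inequality estimate — together with the interpolation against the trivial geometric bound needed to cover the full range of $\alpha$; the law-of-cosines computation itself is routine.
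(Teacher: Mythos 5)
Your proof is correct, and it is essentially the standard argument — the law of cosines on the frequency triangle combined with the identity $\theta_0-\theta_1-\theta_2=\pm|\xi|\mp_1|\eta|\mp_2|\xi-\eta|$, interpolated against the trivial geometric bounds $|\eta|+|\xi-\eta|-|\xi|\le 2M$ and $|\xi|-\bigl||\eta|-|\xi-\eta|\bigr|\le 2M$ — which the paper does not reprove but delegates to \cite{AFS}, Lemma 7 and \cite{ST}, Lemma 4.3. Your write-up is a correct self-contained version of that cited argument, including the correct treatment of the opposite-sign case via the supplementary angle.
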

\begin{proof} These results follow from \cite{AFS}, Lemma 7 and the considerations ahead of and after that lemma, where we use that in the case of different signs $\pm_1$ and $\pm_2$ we have
$\angle (\pm_1 \eta,\pm_2 (\xi - \eta)) =  \angle (\eta,\eta - \xi)$. Cf. also \cite{ST}, Lemma 4.3.
\end{proof}
\begin{proof} [Proof of Theorem \ref{Theorem2}]
It is standard that the claimed result follows by the contraction mapping principle in connection with the linear theory, if the following estimates hold:
\begin{align}
\label{3.6}
\| \langle \nabla \rangle_m^{-1} \mathcal{M}(\phi_+,\phi_-,A_+,A_-)\|_{X^{s,-\frac{1}{2}++}_{\pm}} & \lesssim R^2 + R^3 
\end{align}
and
\begin{align}
\label{3.7}
\| | \nabla |^{-1} \mathcal{N}(\phi_+,\phi_-,A_+,A_-)\|_{X^{r,-\epsilon_0 +}_{\pm}} & \lesssim R^2 + R^3 \quad \mbox{for} \, n=3 \\
\label{3.8}
\| | \nabla |^{-1+\epsilon_1} \mathcal{N}(\phi_+,\phi_-,A_+,A_-)\|_{X^{r-\epsilon_1,-\epsilon_0 +}_{\pm}} & \lesssim R^2 + R^3 \quad \mbox{for} \, n=2 \, ,
\end{align}
where
$$
R  = \sum_{\pm} (\|\phi_{\pm}\|_{X^{s,\frac{1}{2}+}_{\pm}} + \| |\nabla| A_{\pm}\|_{X^{r-1,1-\epsilon_0}_{\pm}}) \quad \mbox{for} \, n=3 \, , $$
$$
R  = \sum_{\pm} (\|\phi_{\pm}\|_{X^{s,\frac{1}{2}+}_{\pm}} + \| |\nabla|^{1-\epsilon} A_{\pm}\|_{X^{r-1+\epsilon,1-\epsilon_0}_{\pm}}) \quad \mbox{for} \, n=2 \, . $$

We start to prove (\ref{3.6}) by estimating the first summand of $\mathcal{M}$ in (\ref{3.3}). \\
{\bf Claim 1:} $$\| \langle \nabla \rangle^{-1} \mathcal{A}_{(\pm_1,\pm_2)} (A_{0\pm_1},\phi_{\pm_2}) \|_{X^{s,-\frac{1}{2}++}_{\pm}} \lesssim \|A_{0\pm_1}\|_{X^{r,1-\epsilon_0}_{\pm_1}} \|\phi_{\pm_2}\|_{X^{s,\frac{1}{2}+}_{\pm 2}} \, . $$
This claim is equivalent to
\begin{align*}
I & := \left| \int \frac{a_{(\pm_1,\pm_2)} (\eta,\xi-\eta) \widehat{w}(\tau,\xi)}{\langle \xi \rangle^{1-s} \langle \tau \pm |\xi| \rangle^{\frac{1}{2}--}} \frac{\widehat{u}(\lambda,\eta)}{\langle \eta \rangle^r \langle \lambda \pm_1 |\eta| \rangle^{1-\epsilon_0}} \frac{\widehat{v}(\tau - \lambda,\xi - \eta)}{\langle \xi - \eta \rangle^s \langle \tau - \lambda \pm_2 |\xi - \eta| \rangle^{\frac{1}{2}+}} \right| \\
& \lesssim \|u\|_{L^2} \|v\|_{L^2} \|w\|_{L^2} \, .
\end{align*}
Here and in the rest of the paper we may assume that the Fourier transforms are nonnegative.
We use the following estimate, which follows from (\ref{2.3}) and (\ref{angle1}):
\begin{align*}
&|a_{(\pm_1,\pm_2)}(\eta,\xi - \eta)|  \lesssim m + |\xi - \eta| \angle(\pm_1 \eta,\pm_2 (\xi-\eta)) \\
& \lesssim m + |\xi-\eta| \left(\left(\frac{\langle \tau \pm |\xi|\rangle}{\min(\langle \eta \rangle, \langle \xi - \eta \rangle)} \right)^{\frac{1}{2}--} + \frac{\langle \lambda \pm_1 |\eta| \rangle^{\frac{1}{2}} + \langle \tau - \lambda \pm_2 |\xi - \eta| \rangle^{\frac{1}{2}}}{\min(\langle \eta \rangle,\langle \xi-\eta \rangle)^{\frac{1}{2}}} \right) \, .
\end{align*}
We first consider the second summand and distinguish three cases.\\
{\bf Case 1:} $(\frac{\langle \tau \pm |\xi|\rangle}{\min(\langle \eta \rangle, \langle \xi - \eta \rangle)})^{\frac{1}{2}--}$ dominant.\\
{\bf Case 1a:} $|\xi - \eta| \le |\eta|$.\\
We obtain
$$I  \lesssim  \int \frac{ \widehat{w}(\tau,\xi)}{\langle \xi \rangle^{1-s}} \frac{\widehat{u}(\lambda,\eta)}{\langle \eta \rangle^r \langle \lambda \pm_1 |\eta| \rangle^{1-\epsilon_0}} \frac{\widehat{v}(\tau - \lambda,\xi - \eta)}{\langle \xi - \eta \rangle^{s-\frac{1}{2}--} \langle \tau - \lambda \pm_2 |\xi - \eta| \rangle^{\frac{1}{2}+}} \, . $$
The desired estimate follows by application of Theorem \ref{Theorem3} with the choice
$s_0=1-s$ , $b_0=0$ , $s_1=r$ , $b_1=1-\epsilon_0$ , $s_2=s-\frac{1}{2}--$ , $b_2=\frac{1}{2}+$. The assumptions of the theorem are fulfilled under our choice of $s$ and $r$.\\
{\bf Case 1b:} $|\xi-\eta| \ge |\eta|$.\\
In this case we obtain
$$I  \lesssim  \int \frac{ \widehat{w}(\tau,\xi)}{\langle \xi \rangle^{1-s}} \frac{\widehat{u}(\lambda,\eta)}{\langle \eta \rangle^{r+\frac{1}{2}--} \langle \lambda \pm_1 |\eta| \rangle^{1-\epsilon_0}} \frac{\widehat{v}(\tau - \lambda,\xi - \eta)}{\langle \xi - \eta \rangle^{s-1} \langle \tau - \lambda \pm_2 |\xi - \eta| \rangle^{\frac{1}{2}+}} \, , $$
which allows an application of Theorem \ref{Theorem3} with the choice
$s_0=1-s$ , $b_0=0$ , $s_1=r+\frac{1}{2}--$ , $b_1=1-\epsilon_0$ , $s_2=s-1$ , $b_2=\frac{1}{2}+$.\\
{\bf Case 2:} $(\frac{\langle \lambda \pm_1 |\eta|\rangle}{\min(\langle \eta \rangle, \langle \xi - \eta \rangle)})^{\frac{1}{2}}$ dominant.\\
{\bf Case 2a:} $|\xi - \eta| \le |\eta|$.\\
We obtain 
$$I  \lesssim  \int \frac{ \widehat{w}(\tau,\xi)}{\langle \xi \rangle^{1-s} \langle \tau \pm |\xi| \rangle^{\frac{1}{2}--}} \frac{\widehat{u}(\lambda,\eta)}{\langle \eta \rangle^r \langle \lambda \pm_1 |\eta| \rangle^{\frac{1}{2}-\epsilon_0}} \frac{\widehat{v}(\tau - \lambda,\xi - \eta)}{\langle \xi - \eta \rangle^{s-\frac{1}{2}} \langle \tau - \lambda \pm_2 |\xi - \eta| \rangle^{\frac{1}{2}+}} \, . $$
The desired estimate follows by application of Theorem \ref{Theorem3} with the choice
$s_0=1-s$ , $b_0=\frac{1}{2}--$ , $s_1=r$ , $b_1=\frac{1}{2}-\epsilon_0$ , $s_2=s-\frac{1}{2}$ , $b_2=\frac{1}{2}+$.\\
{\bf Case 2b:} $|\xi - \eta| \ge |\eta|$.\\
We obtain
$$I  \lesssim  \int \frac{ \widehat{w}(\tau,\xi)}{\langle \xi \rangle^{1-s} \langle \tau \pm |\xi| \rangle^{\frac{1}{2}--}} \frac{\widehat{u}(\lambda,\eta)}{\langle \eta \rangle^{r +\frac{1}{2}} \langle \lambda \pm_1 |\eta| \rangle^{\frac{1}{2}-\epsilon_0}} \frac{\widehat{v}(\tau - \lambda,\xi - \eta)}{\langle \xi - \eta \rangle^{s-1} \langle \tau - \lambda \pm_2 |\xi - \eta| \rangle^{\frac{1}{2}+}} \, . $$
We apply Theorem \ref{Theorem3} with 
$s_0=1-s$ , $b_0=\frac{1}{2}--$ , $s_1=r+\frac{1}{2}$ , $b_1=\frac{1}{2}-\epsilon_0$ , $s_2=s-1$ , $b_2=\frac{1}{2}+$. \\
{\bf Case 3:} $(\frac{\langle \tau -\lambda \pm_2 |\xi - \eta|\rangle}{\min(\langle \eta \rangle, \langle \xi - \eta \rangle)})^{\frac{1}{2}}$ dominant.\\
{\bf Case 3a:} $|\xi - \eta| \le |\eta|$.\\
We obtain 
$$I  \lesssim  \int \frac{ \widehat{w}(\tau,\xi)}{\langle \xi \rangle^{1-s} \langle \tau \pm |\xi| \rangle^{\frac{1}{2}--}} \frac{\widehat{u}(\lambda,\eta)}{\langle \eta \rangle^r \langle \lambda \pm_1 |\eta| \rangle^{1-\epsilon_0}} \frac{\widehat{v}(\tau - \lambda,\xi - \eta)}{\langle \xi - \eta \rangle^{s-\frac{1}{2}}} $$
and use Theorem \ref{Theorem3} with 
$s_0=1-s$ , $b_0=\frac{1}{2}--$ , $s_1=r$ , $b_1=1-\epsilon_0$ , $s_2=s-1$,  $b_2=0$.\\
{\bf Case 3b:} $|\xi - \eta| \ge |\eta|$.\\
We obtain
$$I  \lesssim  \int \frac{ \widehat{w}(\tau,\xi)}{\langle \xi \rangle^{1-s} \langle \tau \pm |\xi| \rangle^{\frac{1}{2}--}} \frac{\widehat{u}(\lambda,\eta)}{\langle \eta \rangle^{r +\frac{1}{2}} \langle \lambda \pm_1 |\eta| \rangle^{1-\epsilon_0}} \frac{\widehat{v}(\tau - \lambda,\xi - \eta)}{\langle \xi - \eta \rangle^{s-1}}  $$
and apply Theorem \ref{Theorem3} with 
$s_0=1-s$ , $b_0=\frac{1}{2}--$ , $s_1=r+\frac{1}{2}$ , $b_1=1-\epsilon_0$ , $s_2=s-1$ , $b_2=0$. \\
Next we consider the first summand which is easier to handle. We obtain the estimate
$$I  \lesssim m \int \frac{ \widehat{w}(\tau,\xi)}{\langle \xi \rangle^{1-s} \langle \tau \pm |\xi| \rangle^{\frac{1}{2}--}} \frac{\widehat{u}(\lambda,\eta)}{\langle \eta \rangle^r \langle \lambda \pm_1 |\eta| \rangle^{1-\epsilon_0}} \frac{\widehat{v}(\tau - \lambda,\xi - \eta)}{\langle \xi - \eta \rangle^s \langle \tau - \lambda \pm_2 |\xi - \eta| \rangle^{\frac{1}{2}+}} \, . $$
and use Theorem \ref{Theorem3} with 
$s_0=1-s$ , $b_0=\frac{1}{2}--$ , $s_1=r$ , $b_1=1-\epsilon_0$ , $s_2=s$ , $b_2=\frac{1}{2}+$.\\
{\bf Claim 2a:} For $n=3$ we have
$$\| \langle \nabla \rangle^{-1} \mathcal{A}_{(\pm_1,\pm_2)} (A_{0\pm_1},\phi_{\pm_2}) \|_{X^{s,-\frac{1}{2}++}_{\pm}} \lesssim \|\nabla A_{0\pm_1}\|_{X^{r-1,1-\epsilon_0}_{\pm_1}} \|\phi_{\pm_2}\|_{X^{s,\frac{1}{2}+}} \, . $$
For large frequencies $|\eta| \ge 1$ of $A$ this is contained in claim 1. If $|\eta|  \le 1$ we have to replace $I$ by $I'$ where $\langle \eta \rangle^r$ is replaced by $\langle \eta \rangle^{r-1} |\eta|$, which is equivalent to $ \langle \eta \rangle |\eta|$, thus we obtain 
$$I'  \lesssim  \int \frac{ \widehat{w}(\tau,\xi)}{\langle \xi \rangle^{1-s} \langle \tau \pm |\xi| \rangle^{\frac{1}{2}--}} \frac{\widehat{u}(\lambda,\eta)}{\langle \eta \rangle |\eta| \langle \lambda \pm_1 |\eta| \rangle^{1-\epsilon_0}} \frac{\widehat{v}(\tau - \lambda,\xi - \eta)}{\langle \xi - \eta \rangle^{s-1} \langle \tau - \lambda \pm_2 |\xi - \eta| \rangle^{\frac{1}{2}+}} \, . $$
We have $\langle \xi \rangle \sim \langle \xi - \eta \rangle$, thus
\begin{align*}
I' & \lesssim  \int \frac{ \widehat{w}(\tau,\xi)}{ \langle \tau \pm |\xi| \rangle^{\frac{1}{2}--}}
 \frac{\widehat{u}(\lambda,\eta)}{\langle \eta \rangle |\eta| \langle \lambda \pm_1 |\eta| \rangle^{1-\epsilon_0}}
  \frac{\widehat{v}(\tau - \lambda,\xi - \eta)}{ \langle \tau - \lambda \pm_2 |\xi - \eta| \rangle^{\frac{1}{2}+}} \\
 & \lesssim \|w\|_{L^2_{xt}}
\| {\mathcal F}^{-1} (\frac{\widehat{u}(\lambda,\eta)}{\langle \eta \rangle |\eta| \langle \lambda \pm_1 |\eta|\rangle^{1-\epsilon_0}})\|_{L^{\infty}_t L_x^{\infty}} 
\|v\|_{L^2_{xt}} \, .  
 \end{align*}
Now using $H^{\frac{1}{2}+,6}_x \subset L^{\infty}_x$ and $\dot{H}^{1,2}_x \subset L_x^6$ we obtain
$$\| {\mathcal F}^{-1} (\frac{\widehat{u}(\lambda,\eta)}{\langle \eta \rangle |\eta| \langle \lambda \pm_1 |\eta|\rangle^{1-\epsilon_0}})\|_{L^{\infty}_t L_x^{\infty}} \lesssim
\| {\mathcal F}^{-1} (\frac{\widehat{u}(\lambda,\eta)}{\langle \eta \rangle^{\frac{1}{2}-}  \langle \lambda \pm_1 |\eta|\rangle^{1-\epsilon_0}})\|_{L^{\infty}_t L_x^2} \lesssim \|u\|_{L^2_{xt}} \, ,
$$
thus the desired estimate.\\
{\bf Claim 2b:} For $n=2$ we have
$$\| \langle \nabla \rangle^{-1} \mathcal{A}_{(\pm_1,\pm_2)} (A_{0\pm_1},\phi_{\pm_2}) \|_{X^{s,-\frac{1}{2}++}_{\pm}} \lesssim \||\nabla|^{1-\epsilon} A_{0\pm_1}\|_{X^{r-1+\epsilon,1-\epsilon_0}_{\pm_1}} \|\phi_{\pm_2}\|_{X^{s,\frac{1}{2}+}_{\pm_2}} \, . $$
We estimate similarly as in the 3D case using $H^{\epsilon+,\frac{2}{\epsilon}}_x \subset L^{\infty}_x$ and $\dot{H}^{1-\epsilon,2}_x \subset L^{\frac{2}{\epsilon}}_x$ to obtain
\begin{align*}
\| {\mathcal F}^{-1} (\frac{\widehat{u}(\lambda,\eta)}{\langle \eta \rangle |\eta|^{1-\epsilon} \langle \lambda \pm_1 |\eta|\rangle^{1-\epsilon_0}})\|_{L^{\infty}_t L_x^{\infty}} &\lesssim
\| {\mathcal F}^{-1} (\frac{\widehat{u}(\lambda,\eta)}{\langle \eta \rangle^{1-\epsilon-}  \langle \lambda \pm_1 |\eta|\rangle^{1-\epsilon_0}})\|_{L^{\infty}_t L_x^2} \\
& \lesssim \|u\|_{L^2_{xt}} \, ,
\end{align*}
giving the desired estimate.

The second summand $2i P_2$ of $\mathcal{M}$ in (\ref{3.3}) can be handled in the same way as the first summand before, because its symbol satisfies (\ref{2.0}) and therefore also the estimate (\ref{2.3}) without the term $m$.

In order to complete the proof of (\ref{3.6}) it remains to consider the cubic last summand of $\mathcal{M}$ in (\ref{3.3}).\\
{\bf Claim 3:} $$\| \langle \nabla \rangle^{-1}(A^{\mu}_{\pm_1} A^{\mu}_{\pm_2} \phi_{\pm_3})\|_{X^{s,-\frac{1}{2}++}_{\pm}} \lesssim \|A^{\mu}_{\pm_1}\|_{X^{r,1-\epsilon_0}_{\pm_1}} \|A^{\mu}_{\pm_2}\|_{X^{r,1-\epsilon_0}_{\pm_2}} \|\phi_{\pm_3}\|_{X^{s,\frac{1}{2}+}_{\pm_3}} \, . $$
In a first step we obtain for $l=0+$ and $l=-\frac{1}{4}+\delta+$ in the case $n=3$ and $n=2$, respectively:
$$\| \langle \nabla \rangle^{-1}(A^{\mu}_{\pm_1} A^{\mu}_{\pm_2} \phi_{\pm_3})\|_{X^{s,-\frac{1}{2}++}_{\pm}} \lesssim \|A^{\mu}_{\pm_1} A^{\mu}_{\pm_2} \|_{X^{l,0}_{\pm}} \|\phi_{\pm_3}\|_{X^{s,\frac{1}{2}+}_{\pm_3}} $$
by use of Theorem \ref{Theorem3} with $s_0 = 1-s$ , $b_0=\frac{1}{2}--$ , $s_1=l$ , $b_1=0$ , $s_2=s$ , $b_2=\frac{1}{2}+$. \\
In the case $n=3$ we use Strichartz' estimate (cf. \cite{GV}) and obtain
$$\|A^{\mu}_{\pm_1} A^{\mu}_{\pm_2} \|_{X^{0+,0}_{\pm}} \lesssim \|A^{\mu}_{\pm_1}\|_{L^4_t H^{0+,4}_x} \|A^{\mu}_{\pm_2}\|_{L^4_t H^{0+,4}_x}
\lesssim \|A^{\mu}_{\pm_1}\|_{X^{\frac{1}{2}+,\frac{1}{2}+}_{\pm_1}} \|A^{\mu}_{\pm_2}\|_{X^{\frac{1}{2}+,\frac{1}{2}+}_{\pm_2}} \, , $$
which gives the claimed bound.\\
In the case $n=2$ we use Theorem \ref{Theorem3} with $s_0 = \frac{1}{4}-\delta-$ , $b_0 =0$ , $s_1=s_2=\frac{1}{4}+\delta$, $b_1=b_2=1-\epsilon_0$ which gives the estimate
$$\|A^{\mu}_{\pm_1} A^{\mu}_{\pm_2} \|_{X^{-\frac{1}{4}+\delta+,0}_{\pm}}
\lesssim \|A^{\mu}_{\pm_1}\|_{X^{\frac{1}{4}+\delta,1-\epsilon_0}_{\pm_1}} \|A^{\mu}_{\pm_2}\|_{X^{\frac{1}{4}+\delta,1-\epsilon_0}_{\pm_2}} \, , $$
and thus the claimed bound.\\
{\bf Claim 4a:} In the case $n=3$ the following estimate holds:
$$\| \langle \nabla \rangle^{-1}(A^{\mu}_{\pm_1} A^{\mu}_{\pm_2} \phi_{\pm_3})\|_{X^{s,-\frac{1}{2}++}_{\pm}} \lesssim \| \nabla A^{\mu}_{\pm_1}\|_{X^{r-1,1-\epsilon_0}_{\pm_1}} \| \nabla A^{\mu}_{\pm_2}\|_{X^{r-1,1-\epsilon_0}_{\pm_2}} \|\phi_{\pm_3}\|_{X^{s,\frac{1}{2}+}_{\pm_3}} \, . $$
It remains to consider the case of low frequencies in at least one of the factors $A^{\mu}_{\pm_1}$ or $A^{\mu}_{\pm_2}$. By Sobolev's embedding we obtain
\begin{align*}
\| \langle \nabla \rangle^{-1}(A^{\mu}_{\pm_1} A^{\mu}_{\pm_2} \phi_{\pm_3})\|_{X^{s,-\frac{1}{2}++}_{\pm}} &\lesssim \|A^{\mu}_{\pm_1} A^{\mu}_{\pm_2} \phi_{\pm_3}\|_{L^2_t H^{s-1}_x} 
 \lesssim \|A^{\mu}_{\pm_1} A^{\mu}_{\pm_2} \phi_{\pm_3}\|_{L^2_t L^{\frac{12}{7}+}_x} \\
 &\lesssim \|A^{\mu}_{\pm_1}\|_{L^4_t L^6_x} \| A^{\mu}_{\pm_2}\|_{L^4_t L^6_x} \| \phi_{\pm_3}\|_{L^{\infty}_t L^{4+}_x} \, ,
\end{align*}
which in the case where both frequencies are small is simply estimated by
\begin{align*}
&\|\nabla A^{\mu}_{\pm_1}\|_{L^4_t L^2_x} \|\nabla A^{\mu}_{\pm_2}\|_{L^4_t L^2_x} \| \phi_{\pm_3}\|_{L^{\infty}_t H^{\frac{3}{4}+}_x} \\
&\lesssim \|\nabla A^{\mu}_{\pm_1}\|_{X^{r-1,\frac{1}{2}+}_{\pm_1}} \|\nabla A^{\mu}_{\pm_2}\|_{X^{r-1,\frac{1}{2}+}_{\pm_2}} \| \phi_{\pm_3}\|_{X^{s,\frac{1}{2}+}_{\pm_3}} 
\end{align*}
as desired, whereas if the frequency of, say, $A^{\mu}_{\pm_1}$ is $\le 1$ and the frequency of $A^{\mu}_{\pm_2}$ is $\ge 1$, we use Strichartz' estimate (cf. \cite{GV}) and Sobolev's embedding to obtain the bound
\begin{align*}
 &\|A^{\mu}_{\pm_1}\|_{L^4_t L^{12}_x} \| A^{\mu}_{\pm_2}\|_{L^4_t L^4_x} \| \phi_{\pm_3}\|_{L^{\infty}_t L^{4+}_x} \lesssim \|A^{\mu}_{\pm_1}\|_{L^4_t H^{\frac{1}{4},6}_x} \| A^{\mu}_{\pm_2}\|_{X^{\frac{1}{2}+,\frac{1}{2}+}_x} \| \phi_{\pm_3}\|_{X^{\frac{3}{4}+,\frac{1}{2}+}_x}\\
&\lesssim  \|\nabla A^{\mu}_{\pm_1}\|_{L^4_t H^{\frac{1}{4},2}_x} \| \nabla A^{\mu}_{\pm_2}\|_{X^{-\frac{1}{2}+,\frac{1}{2}+}_{\pm_2}} \| \phi_{\pm_3}\|_{X^{\frac{3}{4}+,\frac{1}{2}+}_{\pm_3}} \\
&\lesssim \|\nabla A^{\mu}_{\pm_1}\|_{X^{r-1,1-\epsilon_0}_{\pm_1}} \| \nabla A^{\mu}_{\pm_2}\|_{X^{-\frac{1}{2}+,\frac{1}{2}+}_{\pm_2}} \| \phi_{\pm_3}\|_{X^{\frac{3}{4}+,\frac{1}{2}+}_{\pm_3}} \, .
\end{align*}
{\bf Claim 4b:} In the case $n=2$ the following estimate holds:
\begin{align*}
&\| \langle \nabla \rangle^{-1}(A^{\mu}_{\pm_1} A^{\mu}_{\pm_2} \phi_{\pm_3})\|_{X^{s,-\frac{1}{2}++}_{\pm}} \\
&\lesssim \| |\nabla|^{1-\epsilon} A^{\mu}_{\pm_1}\|_{X^{r-1+\epsilon,1-\epsilon_0}_{\pm_1}} \| |\nabla|^{1-\epsilon} A^{\mu}_{\pm_2}\|_{X^{r-1+\epsilon,1-\epsilon_0}_{\pm_2}} \|\phi_{\pm_3}\|_{X^{s,\frac{1}{2}+}_{\pm_3}} \, . 
\end{align*}
It remains to consider the case of low frequencies in at least one of the factors $A^{\mu}_{\pm_1}$ or $A^{\mu}_{\pm_2}$. We crudely estimate
$$\| \langle \nabla \rangle^{-1}(A^{\mu}_{\pm_1} A^{\mu}_{\pm_2} \phi_{\pm_3})\|_{X^{s,-\frac{1}{2}++}_{\pm}}  \lesssim \|A^{\mu}_{\pm_1} A^{\mu}_{\pm_2} \phi_{\pm_3}\|_{L^2_{xt}} \, . $$
If both frequencies are $\le 1$ we obtain the bound
\begin{align*}
&\|A^{\mu}_{\pm_1}\|_{L^4_t L^{\frac{2}{\epsilon}}_x} \|A^{\mu}_{\pm_2}\|_{L^4_t L^{\frac{2}{\epsilon}}_x} \|\phi_{\pm_3}\|_{L^{\infty}_t L^{\frac{2}{1-2\epsilon}}_x} \\
&\lesssim \||\nabla|^{1-\epsilon} A^{\mu}_{\pm_1}\|_{L^4_t L^2_x} 
\||\nabla|^{1-\epsilon} A^{\mu}_{\pm_2}\|_{L^4_t L^2_x}
 \|\phi_{\pm_3}\|_{X^{2\epsilon,\frac{1}{2}+}_{\pm_3}} \\
 & \lesssim \||\nabla|^{1-\epsilon} A^{\mu}_{\pm_1}\|_{X^{r-1+\epsilon,\frac{1}{2}+}_{\pm_1}} 
\||\nabla|^{1-\epsilon} A^{\mu}_{\pm_2}\|_{X^{r-1+\epsilon,\frac{1}{2}+}_{\pm_2}}
 \|\phi_{\pm_3}\|_{X^{s,\frac{1}{2}+}_{\pm_3}} \, .
\end{align*}
If the frequencies of, say, $A^{\mu}_{\pm_1}$ are $\le 1$ and the frequencies of $A^{\mu}_{\pm_2}$ are $\ge 1$, we obtain the bound
\begin{align*}
&\|A^{\mu}_{\pm_1}\|_{L^{\infty}_t L^{\frac{2}{\epsilon}}_x} \|A^{\mu}_{\pm_2}\|_{L^2_t L^{\frac{8}{3-4\epsilon}}_x} \|\phi_{\pm_3}\|_{L^{\infty}_t L^8_x} \\
&\lesssim \||\nabla|^{1-\epsilon} A^{\mu}_{\pm_1}\|_{L^{\infty}_t L^2_x} 
\| A^{\mu}_{\pm_2}\|_{L^2_t H^{\frac{1}{4}+\epsilon}_x}
 \|\phi_{\pm_3}\|_{L^{\infty}_t H^{\frac{3}{4}}_x} \\
 & \lesssim \||\nabla|^{1-\epsilon} A^{\mu}_{\pm_1}\|_{X^{r-1+\epsilon,\frac{1}{2}+}_{\pm_1}} 
\||\nabla|^{1-\epsilon} A^{\mu}_{\pm_2}\|_{X^{r-1+\epsilon,\frac{1}{2}+}_{\pm_2}}
 \|\phi_{\pm_3}\|_{X^{2\epsilon,\frac{1}{2}+}_{\pm_3}} \, .
\end{align*}
This completes the proof of (\ref{3.6}).

Next we prove (\ref{3.7}) and (\ref{3.8}).\\
{\bf Claim 5a:} In the case $n=3$ the following estimate holds:
$$ \| |\nabla|^{-1} (\phi_{\pm_1} \langle \nabla \rangle \phi_{\pm_2})\|_{X^{r,-\epsilon_0 +}_{\pm}} \lesssim \|\phi_{\pm_1}\|_{X^{s,\frac{1}{2}+}_{\pm_1}}
\|\phi_{\pm_2}\|_{X^{s,\frac{1}{2}+}_{\pm_2}} \,. $$
For frequencies of the product which are $\ge 1$ the left hand side is equivalent to
$\|\phi_{\pm_1} \langle \nabla \rangle \phi_{\pm_2}\|_{X^{r-1,-\epsilon_0 +}_{\pm}}$ and the claimed estimate follows by use of Theorem \ref{Theorem3} with $s_0 = 1-r$ , $b_0 = \epsilon_0 +$ , $s_1=s_2=s$ , $b_1 = b_2 = \frac{1}{2}+$ .\\
For low frequencies $\le 1$ of the product we have frequencies of the factors which are equivalent. Thus we obtain
\begin{align*}
\| |\nabla|^{-1}(\phi_{\pm_1} \langle \nabla \rangle 
\phi_{\pm_2})\|_{X^{r,-\epsilon_0+}_{\pm}}
&\lesssim \| |\nabla|^{-1}(\langle \nabla \rangle^{\frac{1}{2}} \phi_{\pm_1} \langle \nabla \rangle^{\frac{1}{2}} \phi_{\pm_2}) \|_{L^2_{xt}} \\
\lesssim \|\langle \nabla \rangle^{\frac{1}{2}} \phi_{\pm_1} \langle \nabla \rangle^{\frac{1}{2}} \phi_{\pm_2} \|_{L^2_t L^{\frac{6}{5}}_x} 
&\lesssim \|\langle \nabla \rangle^{\frac{1}{2}} \phi_{\pm_1}\|_{L^4_t L^{\frac{12}{5}}_x} \| \langle \nabla \rangle^{\frac{1}{2}} \phi_{\pm_2} \|_{L^4_t L^{\frac{12}{5}}_x} \\
\lesssim \|\phi_{\pm_1}\|_{L^4_t H^{\frac{3}{4}}_x} \|\phi_{\pm_2}\|_{L^4_t H^{\frac{3}{4}}_x} &\lesssim \|\phi_{\pm_1}\|_{X^{s,\frac{1}{2}+}_{\pm_1}} \|\phi_{\pm_2}\|_{X^{s,\frac{1}{2}+}_{\pm_2}} \, . 
\end{align*}
{\bf Claim 5b:} In the case $n=2$ the following estimate holds:
$$ \| |\nabla|^{-1+\epsilon_1} (\phi_{\pm_1} \langle \nabla \rangle \phi_{\pm_2})\|_{X^{r-\epsilon_1,-\epsilon_0 +}_{\pm}} \lesssim \|\phi_{\pm_1}\|_{X^{s,\frac{1}{2}+}_{\pm_1}}
\|\phi_{\pm_2}\|_{X^{s,\frac{1}{2}+}_{\pm_2}} \,. $$
For frequencies of the product $\ge 1$ we estimate similarly as in the case $n=3$.\\
For low frequencies $\le 1$ of the product we obtain
\begin{align*}
\| |\nabla|^{-1+\epsilon_1}(\phi_{\pm_1} \langle \nabla \rangle 
\phi_{\pm_2})\|_{X^{r-\epsilon_1,-\epsilon_0+}_{\pm}}
&\lesssim \| |\nabla|^{-1+\epsilon_1}(\langle \nabla \rangle^{\frac{1}{2}} \phi_{\pm_1} \langle \nabla \rangle^{\frac{1}{2}} \phi_{\pm_2}) \|_{L^2_{xt}} \\
\lesssim \|\langle \nabla \rangle^{\frac{1}{2}} \phi_{\pm_1} \langle \nabla \rangle^{\frac{1}{2}} \phi_{\pm_2} \|_{L^2_t L^{\frac{2}{2-\epsilon_1}}_x} 
&\lesssim \|\langle \nabla \rangle^{\frac{1}{2}} \phi_{\pm_1}\|_{L^4_t L^{\frac{4}{2-\epsilon_1}}_x} \| \langle \nabla \rangle^{\frac{1}{2}} \phi_{\pm_2} \|_{L^4_t L^{\frac{4}{2-\epsilon_1}}_x} \\
\lesssim \|\phi_{\pm_1}\|_{L^4_t H^{\frac{1+\epsilon_1}{2}}_x} \|\phi_{\pm_2}\|_{L^4_t H^{\frac{1+\epsilon_1}{2}}_x} &\lesssim \|\phi_{\pm_1}\|_{X^{s,\frac{1}{2}+}_{\pm_1}} \|\phi_{\pm_2}\|_{X^{s,\frac{1}{2}+}_{\pm_2}} \, . 
\end{align*}
{\bf Claim 6:} In dimension $n=3$ the following estimate holds:
$$ \| |\nabla |^{-1}(A_{\pm_3} \phi_{\pm_1} \phi_{\pm_2}) \|_{X^{r,-\epsilon_0 +}_{\pm}} \lesssim \| \nabla A_{\pm_3}\|_{X^{r-1,1-\epsilon_0}_{\pm_3}} \|\phi_{\pm_1} \|_{X^{s,\frac{1}{2}+}_{\pm_1}} \|\phi_{\pm_2} \|_{X^{s,\frac{1}{2}+}_{\pm_2}} \, . $$
We first assume frequencies $\ge 1$ of the product and $A_{\pm_3}$ and use Theorem \ref{Theorem3} with $s_0 = 1-r$ , $b_0 = \epsilon_0 -$ , $s_1 = r$ , $ b_1 = 1-\epsilon_0$ , $s_2 = \frac{1}{2} - \epsilon_0 +$ , $ b_2 = 0$ to obtain
$$ \|A_{\pm_3} \phi_{\pm_1} \phi_{\pm_2} \|_{X^{r-1,-\epsilon_0 +}_{\pm}} \lesssim \| A_{\pm_3}\|_{X^{r,1-\epsilon_0}_{\pm_3}} \|\phi_{\pm_1} \phi_{\pm_2} \|_{X^{\frac{1}{2}-\epsilon_0+,0}_{\pm}} \, . $$
Next we have
$$\|\phi_{\pm_1} \phi_{\pm_2} \|_{X^{\frac{1}{2}-\epsilon_0+,0}_{\pm}}
\lesssim \|\phi_{\pm_1} \|_{X^{\frac{3}{4}+\delta,\frac{1}{2}+}_{\pm_1}}
 \|\phi_{\pm_2} \|_{X^{\frac{3}{4}+\delta,\frac{1}{2}+}_{\pm_2}}  $$
by Theorem \ref{Theorem3} with $s_0= -\frac{1}{2}+\epsilon_0-$ , $b_0=0$ , $s_1=s_2=\frac{3}{4}+\delta$ , $b_1=b_2=\frac{1}{2}+$, which gives the claimed estimate.\\
Next we assume frequencies $\le 1$ of the product and frequencies $\ge 1$ of $A_{\pm_3}$ which is handled as follows:
\begin{align*}
&\| |\nabla |^{-1}(A_{\pm_3} \phi_{\pm_1} \phi_{\pm_2}) \|_{X^{r,-\epsilon_0 +}_{\pm}}
\lesssim \|A_{\pm_3} \phi_{\pm_1} \phi_{\pm_2} \|_{L^2_t L^{\frac{6}{5}}_x} \\
&\lesssim \|A_{\pm_3}\|_{L^{\infty}_t L^3_x} \|\phi_{\pm_1} \|_{L^4_t L^4_x}  \|\phi_{\pm_2} \|_{L^4_t L^4_x}
 \lesssim  \|A_{\pm_3}\|_{L^{\infty}_t H^{\frac{1}{2}}_x} \|\phi_{\pm_1} \|_{L^4_t H^{\frac{3}{4}}_x}  \|\phi_{\pm_2} \|_{L^4_t H^{\frac{3}{4}}_x} \\
&\lesssim \|A_{\pm_3}\|_{X^{\frac{1}{2},\frac{1}{2}+}_{\pm_3}} \|\phi_{\pm_1} \|_{X^{\frac{3}{4},\frac{1}{2}+}_{\pm_1}}  \|\phi_{\pm_2} \|_{X^{\frac{3}{4},\frac{1}{2}+}_{\pm_2}} \, .
\end{align*}
Finally we have to treat frequencies $\le 1$ of $A_{\pm_3}$ by Sobolev's embedding:
\begin{align*}
&\| |\nabla|^{-1}(A_{\pm_3} \phi_{\pm_1} \phi_{\pm_2} ) \|_{X^{r,-\epsilon_0 +}_{\pm}} \lesssim \|A_{\pm_3} \phi_{\pm_1} \phi_{\pm_2}\|_{L^2_t H^{r,\frac{6}{5}}_x} \\
&\lesssim \|\langle \nabla \rangle^r A_{\pm_3} \phi_{\pm_1} \phi_{\pm_2}\|_{L^2_t L^{\frac{6}{5}}_x} + \| A_{\pm_3} \langle \nabla \rangle^r \phi_{\pm_1} \phi_{\pm_2}\|_{L^2_t L^{\frac{6}{5}}_x} +
\| A_{\pm_3} \phi_{\pm_1} \langle \nabla \rangle^r \phi_{\pm_2}\|_{L^2_t L^{\frac{6}{5}}_x} \\
&\lesssim \|\langle \nabla \rangle^r A_{\pm_3}\|_{L^2_t L^6_x} \| \phi_{\pm_1} \|_{L^{\infty}_t L^3_x} \| \phi_{\pm_2}\|_{L^{\infty}_t L^3_x} \\
& + \| A_{\pm_3} \|_{L^2_t L^6_x} (\| \langle \nabla \rangle^r \phi_{\pm_1}\|_{L^2_t L^{\frac{12}{5}}_x} \| \phi_{\pm_2}\|_{L^2_t L^4_x} + \| \phi_{\pm_1}\|_{L^2_t L^4_x}
\|\langle \nabla \rangle^r \phi_{\pm_2}\|_{L^2_t L^{\frac{12}{5}}_x}) \\
&\lesssim \| \nabla  A_{\pm_3}\|_{L^2_t L^2_x} \| \phi_{\pm_1} \|_{L^{\infty}_t H^{\frac{1}{2}}_x} \| \phi_{\pm_2}\|_{L^{\infty}_t H^{\frac{1}{2}}_x} \\
& + \|\nabla  A_{\pm_3} \|_{L^2_t L^2_x} (\| \langle \nabla \rangle^r \phi_{\pm_1}\|_{L^2_t H^{\frac{1}{4}}_x} \| \phi_{\pm_2}\|_{L^2_t H^{\frac{3}{4}}_x} + \| \phi_{\pm_1}\|_{L^2_t H^{\frac{3}{4}}_x}
\|\langle \nabla \rangle^r \phi_{\pm_2}\|_{L^2_t H^{\frac{1}{4}}_x}) \\
&\lesssim  \| \nabla A_{\pm_3}\|_{X^{r-1,1-\epsilon_0}_{\pm_3}} \|\phi_{\pm_1} \|_{X^{s,\frac{1}{2}+}_{\pm_1}} \|\phi_{\pm_2} \|_{X^{s,\frac{1}{2}+}_{\pm_2}} \, .
\end{align*}
{\bf Claim 7:} In dimension $n=2$ the following estimate holds:
\begin{align*} &\| |\nabla |^{-1+\epsilon_1}(A_{\pm_3} \phi_{\pm_1} \phi_{\pm_2}) \|_{X^{r-\epsilon_1,-\epsilon_0 +}_{\pm}} \\
&\lesssim \| |\nabla|^{1-\epsilon} A_{\pm_3}\|_{X^{r-1+\epsilon,1-\epsilon_0}_{\pm_3}} \|\phi_{\pm_1} \|_{X^{s,\frac{1}{2}+}_{\pm_1}} \|\phi_{\pm_2} \|_{X^{s,\frac{1}{2}+}_{\pm_2}} \, . \end{align*}
For frequencies $\ge 1$ of the product and $A_{\pm_3}$ we first obtain
$$\|A_{\pm_3} \phi_{\pm_1} \phi_{\pm_2} \|_{X^{r-1,-\epsilon_0 +}_{\pm}} \lesssim \|  A_{\pm_3}\|_{X^{r,1-\epsilon_0}_{\pm_3}} \|\phi_{\pm_1} \phi_{\pm_2} \|_{X_{\pm}^{0,0}}   $$
by Theorem \ref{Theorem3} with $s_0 = 1-r$ , $b_0 = \epsilon_0 -$ , $s_1=r$ , $b_1 = 1-\epsilon_0$ , $s_2=0$ , $b_2=0$. Strichartz' estimate (\cite{GV}) shows
$$\|\phi_{\pm_1} \phi_{\pm_2} \|_{X_{\pm}^{0,0}}  \lesssim \|\phi_{\pm_1}\|_{L^4_t L^4_x}
\|\phi_{\pm_2}\|_{L^4_t L^4_x} \le \|\phi_{\pm_1}\|_{X^{\frac{1}{2}+,\frac{1}{2}+}_{\pm_1}}
\|\phi_{\pm_2}\|_{X^{\frac{1}{2}+,\frac{1}{2}+}_{\pm_2}} \, , $$
thus the claimed result.\\
For frequencies $\le 1$ for the product and $\ge 1$ for $A_{\pm_3}$ we estimate as follows:
\begin{align*}
\| |\nabla |^{-1+\epsilon_1}(A_{\pm_3} \phi_{\pm_1} \phi_{\pm_2}) \|_{X^{r-\epsilon_1,-\epsilon_0 +}_{\pm}} &\lesssim \|A_{\pm_3} \phi_{\pm_1} \phi_{\pm_2}\|_{L^2_t L^{\frac{2}{2-\epsilon_1}}_x} \\
&\lesssim \|A_{\pm_3}\|_{L^{\infty}_t L^{\frac{2}{1-\epsilon_1}}_x} \|\phi_{\pm_1}\|_{L^4_t L^4_x} \|\phi_{\pm_2}\|_{L^4_t L^4_x} \\
&\lesssim \|A_{\pm_3}\|_{X^{\epsilon_1,\frac{1}{2}+}_{\pm_3}} \|\phi_{\pm_1}\|_{X^{\frac{1}{2}+,\frac{1}{2}+}_{\pm_1}} 
\|\phi_{\pm_2}\|_{X^{\frac{1}{2}+,\frac{1}{2}+}_{\pm_2}} \, ,
\end{align*}
which implies the claim, so that it remains to consider frequencies $\le 1$ of $A_{\pm_3}$. We estimate as follows:
\begin{align*} &\| |\nabla |^{-1+\epsilon_1}(A_{\pm_3} \phi_{\pm_1} \phi_{\pm_2}) \|_{X^{r-\epsilon_1,-\epsilon_0 +}_{\pm}} \\
&\lesssim \| A_{\pm_3} \phi_{\pm_1} \phi_{\pm_2} \|_{L^2_t H^{r-\epsilon_1,\frac{2}{2-\epsilon_1}}_x} \\
&\lesssim \| \langle \nabla \rangle^{r-\epsilon_1} A_{\pm_3} \phi_{\pm_1} \phi_{\pm_2} \|_{L^2_t L^{\frac{2}{2-\epsilon_1}}_x} + \|  A_{\pm_3} \langle \nabla \rangle^{r-\epsilon_1}\phi_{\pm_1} \phi_{\pm_2} \|_{L^2_t L^{\frac{2}{2-\epsilon_1}}_x} \\
&  \hspace{10em}  +\| A_{\pm_3} \phi_{\pm_1} \langle \nabla \rangle^{r-\epsilon_1}\phi_{\pm_2} \|_{L^2_t L^{\frac{2}{2-\epsilon_1}}_x} \\
& \lesssim \| \langle \nabla \rangle^{r-\epsilon_1} A_{\pm_3}\|_{L^2_t L^{\frac{2}{\epsilon}}_x} \| \phi_{\pm_1} \|_{L^{\infty}_t L^{\frac{4}{2-\epsilon-\epsilon_1}}_x} \| \phi_{\pm_2} \|_{L^{\infty}_t L^{\frac{4}{2-\epsilon -\epsilon_1}}_x} 
+ \|  A_{\pm_3} \|_{L^2_t L^{\frac{2}{\epsilon}}_x} \\
&\hspace{1em} \times (\|\langle \nabla \rangle^{r-\epsilon_1} \phi_{\pm_1} \|_{L^{\infty}_t L^2_x} \| \phi_{\pm_2} \|_{L^{\infty}_t L^{\frac{2}{1-\epsilon -\epsilon_1}}_x} +  \| \phi_{\pm_1} \|_{L^{\infty}_t L^{\frac{2}{1-\epsilon -\epsilon_1}}_x} \|\langle \nabla \rangle^{r-\epsilon_1} \phi_{\pm_2} \|_{L^{\infty}_t L^2_x})\\
& \lesssim \| |\nabla|^{1-\epsilon} A_{\pm_3}\|_{L^2_t L^2_x} \|\phi_{\pm_1}\|_{L^{\infty}_t H^{r-\epsilon_1}_x} \|\phi_{\pm_2}\|_{L^{\infty}_t H^{r-\epsilon_1}_x} \, .
 \end{align*} 
This implies the claimed result and completes the proof of (\ref{3.7}) and (\ref{3.8}) and of Theorem \ref{Theorem2}.
\end{proof}

\section{Proof of Theorem \ref{Theorem1}}
\begin{proof}
We start with the solution $(\phi_{\pm},A_{\pm})$ of (\ref{3.1}),(\ref{3.2}) given by Theorem \ref{Theorem2}. Defining $\phi:=\phi_+ + \phi_-$ , $A:=A_+ + A_-$ we immediately see that $\partial_t \phi = i \langle \nabla \rangle_m (\phi_+ - \phi_-)$ , $\partial_t A = i |\nabla|(A_+ - A_-)$, so that $\mathcal{N}$ in (\ref{3.2}) is the same as $N$ in (\ref{2.2}) and (\ref{16}). 

Moreover  by (\ref{3.2}):
\begin{align*}
\square A & = (i\partial_t - |\nabla|)(i\partial_t + |\nabla|)A_+ + (i\partial_t + |\nabla|)(i\partial_t - |\nabla|)A_- \\
& = -(i \partial_t - |\nabla|)(2|\nabla|)^{-1} \mathcal{N}(A,\phi) + (i\partial_t + |\nabla|) (2|\nabla|)^{-1} \mathcal{N}(A,\phi) = N(A,\phi) \, .
\end{align*}
Thus $A$ satisfies (\ref{16}). We also have $\phi(0) = \phi_0$ , $\partial_t \phi = \phi_1$ , $A(0) = a_0$ , $\partial_t A(0) = \dot{a}_0$.

Next we prove that the Lorenz condition $\partial^{\mu} A_{\mu} =0$ is satisfied. We define
$$
u  := \partial^{\mu} A_{\mu} = -\partial_t A_0 + \partial^j A_j \quad , \quad
u_{\pm}  := -\partial_t A_{0_{\pm}} + \partial^j A_{j_{\pm}} \, . $$
By (\ref{2.2}) we obtain
\begin{align*}
&(i\partial_t \pm |\nabla|)u_{\pm} = -\partial_t(i\partial_t \pm |\nabla|)A_{0_{\pm}} + \partial^j(i\partial_t \pm |\nabla|)A_{j_{\pm}} \\
&= (\pm 2|\nabla|)^{-1}(\partial_t(Im(-\phi \overline{i \langle \nabla \rangle_m(\phi_+ - \phi_-)}) -A_0|\phi|^2)-\partial^j(Im(-\phi\overline{\partial_j \phi}) - A_j |\phi|^2)) \\
&= (\pm 2 |\nabla|)^{-1} (Im(\phi \langle \nabla \rangle_m(-\overline{i\partial_t \phi_+} + \overline{i\partial_t \phi_-})) + Im(\phi \overline{\Delta \phi}) + \partial^{\mu}(A_{\mu} |\phi|^2)) \, .
\end{align*}
Now we have
$$ Im(\phi \overline{\Delta \phi})=Im(\phi(m^2\overline{\phi} - \langle \nabla \rangle_m \langle \nabla \rangle_m \overline{\phi})) = -Im(\phi \langle \nabla\rangle_m(\langle \nabla \rangle_m \overline{\phi}_+ + \langle \nabla \rangle_m \overline{\phi}_-)) \, , $$
so that by (\ref{3.1}) we obtain
\begin{align*}
&(i\partial_t \pm |\nabla|)u_{\pm} \\
&= (\pm 2 |\nabla|)^{-1} (Im(\phi \langle \nabla \rangle_m(-(\overline{i\partial_t + \langle \nabla \rangle_m) \phi_+} + (\overline{i\partial_t - \langle \nabla \rangle_m)\phi_-})) +  \partial^{\mu}(A_{\mu} |\phi|^2)) \\
& = (\pm 2|\nabla|)^{-1}(Im(\phi \overline{\mathcal{M}(\phi_+,\phi_-,A_+,A_-)}) + A_{\mu} \partial^{\mu}(|\phi|^2) + |\phi|^2 u) \\
& =: (\pm 2 |\nabla |)^{-1} R(A,\phi) \, .
\end{align*}
By (\ref{3.3}) and the second equation in (\ref{2.2a})
\begin{align*}
\mathcal{M}(\phi_+,\phi_-,A_+,A_-) & = 2 \sum_{\pm_1,\pm_2} \pm_2 \mathcal{A}_{(\pm_1,\pm_2)} (A_{0_{\pm_1}},\phi_{\pm_2}) + 2iP_2 + A_{\mu} A^{\mu} \phi \\
&=2((A_{0_+} + A_{0_-}) \langle \nabla \rangle_m(\phi_+ - \phi_-) \\
& \hspace{2em} + |\nabla|^{-1} \nabla(A_{0_+} - A_{0_-}) \cdot \nabla (\phi_+ + \phi_-)) +2iP_2 + A_{\mu} A^{\mu} \phi \\
& = 2i(-A_0 \partial_t \phi -  (-\Delta)^{-1} \nabla \partial_t A_0 \cdot \nabla \phi) + 2iP_2 + A_{\mu}A^{\mu} \phi \, .
\end{align*}
Now by the definition of $P_2$
\begin{align*}
 P_2=R^k(R_j A_k - R_k A_j) \partial^j \phi &= (-\Delta)^{-1} \partial^k(\partial_j A_k - \partial_k A_j) \partial^j \phi \\
 &= (-\Delta)^{-1} \nabla(\partial^k A_k)\cdot \nabla \phi + A_j \partial^j \phi 
 \end{align*}
 and by the definition of $u$
$$ (-\Delta)^{-1} \nabla \partial_t A_0 \cdot \nabla \phi = (-\Delta)^{-1} \nabla(\partial^j A_j -u)\cdot \nabla \phi \, , $$
so that we obtain
\begin{align*}
\mathcal{M}(\phi_+,\phi_-,A_+,A_-) & = 2i(-A_0 \partial_t \phi + (-\Delta)^{-1} \nabla u \cdot \nabla \phi + A_j \partial^j \phi) +  A_{\mu} A^{\mu} \phi\\
& = 2i(A_{\mu} \partial^{\mu} \phi + (-\Delta)^{-1} \nabla u \cdot \nabla \phi) + A_{\mu} A^{\mu} \phi \, ,
\end{align*}
which implies
\begin{align*}
&R(A,\phi) \\
& = Im(-\phi 2i(A_{\mu} \partial^{\mu} \overline{\phi} + (-\Delta)^{-1} \nabla u \cdot \nabla \overline{\phi}) + A_{\mu} \partial^{\mu}(|\phi|^2) +|\phi|^2 u + Im(\phi A_{\mu} A^{\mu} \overline{\phi}) \\
& = 2 Re(-\phi A_{\mu} \partial^{\mu} \overline{\phi}) -2 Re(\phi (-\Delta)^{-1} \nabla u \cdot \nabla \overline{\phi}) + 2 Re (A_{\mu} \phi \partial^{\mu} \overline{\phi}) + |\phi|^2 u \\
& = -2 Re(\phi \nabla \overline{\phi}) \cdot (-\Delta)^{-1} \nabla u + |\phi|^2 u \, ,
\end{align*}
so that
$$ (i\partial_t \pm |\nabla|)u_{\pm} = (\pm 2|\nabla|)^{-1}(-2 Re(\phi \nabla \overline{\phi}) \cdot (-\Delta)^{-1} \nabla u + |\phi|^2 u) \, , $$
and thus $u$ fulfills the linear equation
$$ \square u = -2 Re(\phi \nabla \overline{\phi}) \cdot (-\Delta)^{-1} \nabla u + |\phi|^2 u \, . $$
The data of $u$ fulfill by (\ref{10}) and (\ref{12}):
$$ u(0) = -\partial_t A_0(0) + \partial^j A_j(0) = - \dot{a}_{00} + \partial^j a_{0j} = 0 $$
and, using that $A$ is a solution of (\ref{16}) and also (\ref{15}):
\begin{align*}
\partial_t u(0) & = -\partial_t^2 A_0(0) + \partial_t \partial^j A_j(0) = -\partial^j \partial_j A_0(0) - j_{0_{|t=0}} + \partial_t \partial^j A_j(0) \\
& = -\partial^j (\partial_j A_0(0) - \partial_t A_j(0)) - j_{0_{|t=0}} = \partial^j F_{0j}(0) -  j_{0_{|t=0}} \\
& = Im(\phi_0 \overline{\phi}_1) - j_{0_{|t=0}} =  j_{0_{|t=0}}- j_{0_{|t=0}} = 0 \, .
\end{align*}
By uniqueness this implies $u = 0$. Thus the Lorenz condition $\partial^{\mu} A_{\mu} =0$ is satisfied. Under the Lorenz condition however we know that $\mathcal{M}$ in (\ref{3.1}) is the same as $M$ in (\ref{2.1}) and (\ref{17}).  Moreover by (\ref{3.1}) we obtain
\begin{align*}
(\square - m^2) \phi & = (i\partial_t - \langle \nabla \rangle_m)(i \partial_t + \langle \nabla \rangle_m) \phi_+ + (i\partial_t + \langle \nabla \rangle_m)(i \partial_t - \langle \nabla \rangle_m) \phi_- \\
& = (-(i \partial_t - \langle \nabla \rangle_m)  + (i \partial_t + \langle \nabla \rangle_m))(2 \langle \nabla \rangle_m)^{-1} \mathcal{M}(\phi_+,\phi_-,A_+,A_-) \\
& = \mathcal{M}(\phi_+,\phi_-,A_+,A_-) = M(\phi,A) \, .
\end{align*}
Thus $\phi$ satisfies (\ref{17}). Because (\ref{16}),(\ref{17}) is equivalent to (\ref{1}),(\ref{2}), where $F_{\mu \nu} := \partial_{\mu} A_{\nu} - \partial_{\nu} A_{\mu}$, we also have that $F_{k0}$ satisfies (\ref{2.10}) and $F_{kl}$ satisfies (\ref{2.11}).

What remains to be shown are the following properties of the electromagnetic field $F_{\mu \nu}$:
\begin{align*}
&F_{\mu \nu} \in X_+^{s-1,\frac{1}{2}+}[0,T] + X_-^{s-1,\frac{1}{2}+}[0,T] & \mbox{in the case} \, n=3\, , \\
&|\nabla|^{-\epsilon} F_{\mu \nu} \in X_+^{s-1+\epsilon,\frac{1}{2}+}[0,T] + X_-^{s-1+\epsilon,\frac{1}{2}+}[0,T] &\mbox{in the case} \, n=2 \, .
\end{align*}
Using well-known results for Bourgain type spaces these properties  in the case $n=3$ are reduced to:
\begin{align}
\label{4.10}
&F_{\mu \nu} (0) \in H^{s-1} \\
\label{4.11}
&\partial_t  F_{\mu \nu}(0) \in H^{s-2} \quad \mbox{and} \\
\label{4.14}
&\square F_{\mu \nu} \in X_+^{s-2,-\frac{1}{2}+}[0,T] + X_-^{s-2,-\frac{1}{2}+}[0,T] \, .
\end{align}
Similarly it suffices to show in the case $n=2$:
\begin{align}
\label{4.12}
&|\nabla|^{-\epsilon} F_{\mu \nu} (0) \in H^{s-1+\epsilon} \, ,
\\
\label{4.13}
&|\nabla|^{-\epsilon} \partial_t  F_{\mu \nu}(0) \in H^{s-2+\epsilon} \quad \mbox{and} \\
\label{4.15}
&|\nabla|^{-\epsilon} \square F_{\mu \nu} \in X_+^{s-2+\epsilon,-\frac{1}{2}+}[0,T] + X_-^{s-2+\epsilon,-\frac{1}{2}+}[0,T] \, ,
\end{align}
where in all cases $\square F_{\mu \nu} $ is given by (\ref{2.10}) and (\ref{2.11}).

The properties (\ref{4.10}) and (\ref{4.12}) are given by (\ref{8}). 

Next we consider the case $n=3$ and prove (\ref{4.11}). By (\ref{1}) we have
$$\partial_t F_{{0k}_{| t=0}} = - \partial_t F_{{k0}_{|t=0}} = - \partial^l F_{{kl}_{| t=0}} + j_{k_{| t=0}} \, . $$ 
By (\ref{8}) we have $\partial^l F_{{kl}_{| t=0}} \in H^{s-2}$, so it remains to prove $j_{k_{| t=0}} \in H^{s-2}$. We have
$$ j_{k_{| t=0}} = Im(\phi_0 \overline{\partial_k \phi_0}) + |\phi_0|^2 a_{0k} \, . $$
First we show for $s=\frac{3}{4}+$ and $r=\frac{1}{2}+$ :
$$\| |\phi_0|^2 a_{0k}\|_{H^{s-2}} \lesssim \|\phi_0\|^2_{H^s} \| \nabla a_{0k}\|_{H^{r-1}} < \infty \, . $$
We start with the estimate
$$ \| |\phi_0|^2 a_{0k}\|_{H^{-\frac{5}{4}+}} \lesssim \| |\phi_0|^2 \|_{H^{\frac{3}{4}+,\frac{6}{5}}} \| a_{0k} \|_{H^{-\frac{1}{2}+,6}} \, , $$
which follows by duality from the estimate
\begin{align*}
\| w |\phi_0|^2 \|_{H^{\frac{1}{2}-,\frac{6}{5}}}
 & \lesssim \|(\langle \nabla \rangle^{\frac{1}{2}-} w) |\phi_0|^2 \|_{L^{\frac{6}{5}}} + \| w \langle \nabla \rangle^{\frac{1}{2}-} (|\phi_0|^2) \|_{L^{\frac{6}{5}}} \\
& \lesssim \|\langle \nabla \rangle^{\frac{1}{2}-} w\|_{L^4} \| |\phi_0|^2 \|_{L^{\frac{12}{7}}} + \| w |_{L^{12-}} \|\langle \nabla \rangle^{\frac{1}{2}-} (|\phi_0|^2) \|_{L^{\frac{4}{3}+}} \\
& \lesssim \|w\|_{H^{\frac{5}{4}-}} \| |\phi_0|^2 \|_{H^{\frac{3}{4}+,\frac{6}{5}}} \, .
\end{align*}
Moreover
$$ \| |\phi_0|^2 \|_{H^{\frac{3}{4}+,\frac{6}{5}}} \lesssim \| \phi_0 \langle \nabla \rangle^{\frac{3}{4}+} \phi_0 \|_{L^{\frac{6}{5}}} \lesssim \|\phi_0\|_{L^3} \| \langle \nabla \rangle^{\frac{3}{4}+} \phi_0 \|_{L^2} \lesssim \|\phi_0\|_{H^{\frac{1}{2}}} \|\phi_0\|_{H^{\frac{3}{4}+}} \, ,  $$  
so that we arrive at
$$\| |\phi_0|^2 a_{0k}\|_{H^{s-2}} \lesssim  \|\phi_0\|^2_{H^s} \| a_{0k}\|_{H^{r-1,6}}      \lesssim \|\phi_0\|^2_{H^s} \| \nabla a_{0k}\|_{H^{r-1}} \, . $$
Next we show
$$ \| \phi_0 \nabla \phi_0 \|_{H^{s-2}} \lesssim \|\phi_0\|_{H^s} \|\nabla \phi_0\|_{H^{s-1}} < \infty \, . $$
which by duality follows from the estimate
\begin{align*}
\|\phi_0 w \|_{H^{1-s}} &=\|\phi_0 w \|_{H^{\frac{1}{4}-}} \lesssim \| \langle \nabla \rangle^{\frac{1}{4}-} \phi_0 \|_{L^3} \|w\|_{L^6} + \|\phi_0\|_{L^3} \| \langle \nabla \rangle^{\frac{1}{4}-} w \|_{L^6} \\
& \lesssim \|\phi_0 \|_{H^{\frac{3}{4}-}} \|w\|_{H^1} + \|\phi_0\|_{H^{\frac{1}{2}}} \| w \|_{H^{\frac{5}{4}-}} \\
& \lesssim \|\phi_0\|_{H^s} \|w\|_{H^{2-s}} \, .
\end{align*}
Altogether we have shown $\partial_t F_{{0k}_{| t=0}} \in H^{s-2}$. 

Moreover
$$\partial_t F_{jk} = \partial_0(\partial_j A_k - \partial_k A_j) = \partial_j(\partial_k A_0 + F_{0k}) - \partial_k(\partial_j A_0 + F_{0j}) = \partial_j F_{0k} - \partial_k F_{0j} \, ,$$
so that by (\ref{8}) we obtain
$$ \partial_t F_{{jk}_{| t=0}} = \partial_j F_{{0k}_{| t=0}} - \partial_k F_{{0j}_{| t=0}} \in H^{s-2} \, . $$

Next we consider the case $n=2$ and prove (\ref{4.13}). By (\ref{1}) we have
$$|\nabla|^{-\epsilon} \partial_t F_{{0k}_{| t=0}} = -|\nabla|^{-\epsilon} \partial_t F_{{k0}_{| t=0}} = -|\nabla|^{-\epsilon} \partial^l F_{{kl}_{| t=0}} +
|\nabla|^{-\epsilon} j_{{k}_{| t=0}} \, . $$
By (\ref{8}) we have $|\nabla|^{-\epsilon} \partial^l F_{{kl}_{| t=0}} \in H^{s-2+\epsilon}$, so that it remains to prove
$$|\nabla|^{-\epsilon} j_{{k}_{| t=0}} = |\nabla|^{-\epsilon} Im(\phi_0 \overline{\partial_k \phi_0}) + |\nabla|^{-\epsilon} (|\phi_0|^2 a_{0k}) \in H^{s-2+\epsilon} \, . $$
First we show for $s=\frac{3}{4}+\delta$ and $r=\frac{1}{4}+\delta$ the estimate
\begin{equation}
\label{4.16}
\||\nabla|^{-\epsilon} (|\phi_0|^2 a_{0k}) \|_{H^{s-2+\epsilon}} \lesssim \|\phi_0\|_{H^s}^2 \| |\nabla|^{1-\epsilon} a_{0k}\|_{H^{r-1+\epsilon}} < \infty \, .
\end{equation} 
We obtain
\begin{align*}
\||\nabla|^{-\epsilon} (|\phi_0|^2 a_{0k}) \|_{H^{s-2+\epsilon}} &\lesssim \| |\phi_0|^2 a_{0k} \|_{H^{-\frac{5}{4}+\epsilon+\delta,\frac{2}{1+\epsilon}}} \\ &\lesssim\| |\phi_0|^2\|_{H^{\frac{3}{4}-\epsilon-\delta,\frac{2}{2-\epsilon}}} \|a_{0k} \|_{H^{-\frac{3}{4}+\epsilon+\delta,\frac{2}{\epsilon}}} \\
&\lesssim\||\phi_0|^2\|_{H^{\frac{3}{4}-\epsilon-\delta,\frac{2}{2-\epsilon}}} \| |\nabla|^{1-\epsilon} a_{0k} \|_{H^{-\frac{3}{4}+\epsilon+\delta,2}} \\
& \lesssim \|\phi_0\|_{H^s}^2   \| |\nabla|^{1-\epsilon}a_{0k} \|_{H^{r-1+\epsilon}} \, .
\end{align*}
The first and third estimate follows from Sobolev's embedding, the last estimate follows from the crude estimate
$$\||\phi_0|^2\|_{H^{\frac{3}{4}-\epsilon-\delta,\frac{2}{2-\epsilon}}}  \lesssim \|\phi_0\|_{L^{\frac{2}{1-\epsilon}}} \| \langle \nabla \rangle^{\frac{3}{4}-\epsilon - \delta} \phi_0\|_{L^2} \lesssim \|\phi\|_{H^{\frac{3}{4}}}^2 $$
and the second estimate is by duality equivalent to
$$\| |\phi_0|^2 w \|_{H^{\frac{3}{4}-\epsilon-\delta,\frac{2}{2-\epsilon}}}  \lesssim\| |\phi_0|^2\|_{H^{\frac{3}{4}-\epsilon-\delta,\frac{2}{2-\epsilon}}} \|w\|_{H^{\frac{5}{4}-\epsilon-\delta,\frac{2}{1-\epsilon}}} \, $$
which can be shown as follows
\begin{align*}
\| |\phi_0|^2 w \|_{H^{\frac{3}{4}-\epsilon-\delta,\frac{2}{2-\epsilon}}} & 
\lesssim \| \langle \nabla \rangle^{\frac{3}{4}-\epsilon-\delta} 
(|\phi_0|^2) w \|_{L^{\frac{2}{2-\epsilon}}} + \| |\phi_0|^2 \langle \nabla \rangle^{\frac{3}{4}-\epsilon -\delta} w \|_{L^{\frac{2}{2-\epsilon}}} \\
& \lesssim \| |\phi_0|^2 \|_{H^{\frac{3}{4}-\epsilon -\delta,\frac{2}{2-\epsilon}}}
 \|w\|_{L^{\infty}} 
+ \| |\phi|^2 \|_{L^{\frac{4}{3}}} \|w\|_{H^{\frac{3}{4}-\epsilon-\delta,\frac{4}{1-2\epsilon}}} \\
& \lesssim \| |\phi_0|^2 \|_{H^{\frac{3}{4}-\epsilon -\delta,\frac{2}{2-\epsilon}}} \|w\|_{H^{\frac{5}{4}-\epsilon-\delta,\frac{2}{1-\epsilon}}} \, ,
\end{align*}
so that (\ref{4.16}) is shown. Next we estimate
$$ \| |\nabla|^{-\epsilon} (\phi_0 \nabla \overline{\phi}_0)\|_{H^{s-2+\epsilon}} \lesssim \|\phi_0 \nabla \overline{\phi}_0\|_{H^{-\frac{5}{4}+\epsilon+\delta,\frac{2}{1+\epsilon}}} \lesssim \|\phi_0\|_{H^{\frac{3}{4}}} \| \nabla \phi_0\|_{H^{-\frac{1}{4}}} \, . $$
The first estimate follows by Sobolev's embedding and the second estimate by duality from
$$\|\phi_0 w\|_{H^{\frac{1}{4}}} \lesssim \|\phi_0\|_{H^{\frac{1}{4},4}} \|w\|_{L^4} + \|\phi_0\|_{L^8} \|w\|_{H^{\frac{1}{4},\frac{8}{3}}} \lesssim \|\phi_0\|_{H^{\frac{3}{4}}} \|w\|_{H^{\frac{5}{4}-\epsilon-\delta,\frac{2}{1-\epsilon}}} \, , $$
which implies $|\nabla|^{-\epsilon} \partial_t F_{{0k}_{| t=0}} \in H^{s-2+\epsilon}$.

Moreover similarly as in three dimensions (\ref{8}) implies
$$|\nabla|^{-\epsilon} \partial_t F_{{jk}_{| t=0}} = |\nabla|^{-\epsilon} \partial_j F_{{0k}_{| t=0}} - |\nabla|^{-\epsilon} \partial_k F_{{0j}_{| t=0}} \in H^{s-2+\epsilon} \, , $$
which implies (\ref{4.13}).

Our next aim is to prove (\ref{4.14}). We start with the quadratic terms in (\ref{2.10}). Using (\ref{2.10a}) and (\ref{2.10b}) the required estimate in the case $n=3$ follows from the following claim, which holds in the case $n=2$ as well:\\
{\bf Claim 1:} For $n=2$ and $n=3$ the following estimate holds
$$ \| \mathcal{B}_{\pm_1,\pm_2} (\phi_{\pm_1},\phi_{\pm_2}) \|_{X_{\pm}^{s-2,-\frac{1}{2}++}} \lesssim \| \phi_{\pm_1}\|_{X^{s,\frac{1}{2}+}_{\pm_1}}  \|
\phi_{\pm_2}\|_{X^{s,\frac{1}{2}+}_{\pm_2}} \, . $$ 
By (\ref{2.15}) and (\ref{2.16}) this is equivalent to
\begin{align*}
I & := \left| \int \frac{b_{\pm_1,\pm_2} (\eta,\xi-\eta) \widehat{w}(\tau,\xi)}{\langle \xi \rangle^{2-s} \langle \tau \pm |\xi| \rangle^{\frac{1}{2}--}} \frac{\widehat{u}(\lambda,\eta)}{\langle \eta \rangle^s \langle \lambda \pm_1 |\eta| \rangle^{\frac{1}{2}+}} \frac{\widehat{v}(\tau - \lambda,\xi - \eta)}{\langle \xi - \eta \rangle^s \langle \tau - \lambda \pm_2 |\xi - \eta| \rangle^{\frac{1}{2}+}} \right| \\
& \lesssim \|u\|_{L^2} \|v\|_{L^2} \|w\|_{L^2} \, .
\end{align*}
By use of (\ref{2.17}) and (\ref{angle1}) we obtain
\begin{align*}
|b_{\pm_1,\pm_2}(\eta,\xi-\eta) &\lesssim m(|\eta|+|\xi-\eta|) +|\eta| |\xi - \eta| \angle(\pm_1 \eta,\pm_2 (\xi-\eta)) \\
& \lesssim m(|\eta| + |\xi - \eta|) + |\eta||\xi-\eta| \Big(\Big(\frac{\langle \tau \pm |\xi|\rangle}{\min(\langle \eta \rangle, \langle \xi - \eta \rangle)} \Big)^{\frac{1}{2}--} \\
&+ \frac{\langle \lambda \pm_1 |\eta| \rangle^{\frac{1}{2}} + \langle \tau - \lambda \pm_2 |\xi - \eta| \rangle^{\frac{1}{2}}}{\min(\langle \eta \rangle,\langle \xi-\eta \rangle)^{\frac{1}{2}}} \Big) \, .
\end{align*}
By symmetry we may assume $|\eta| \le |\xi - \eta|$. We first estimate the second summand.\\
{\bf Case 1:} $(\frac{\langle \lambda \pm_1 |\eta| \rangle}{|\eta|})^{\frac{1}{2}}$ dominant.\\
We obtain 
$$
I \lesssim  \int  \frac{\widehat{w}(\tau,\xi)}{\langle \xi \rangle^{2-s} \langle \tau \pm |\xi| \rangle^{\frac{1}{2}--}}
\frac{\widehat{u}(\lambda,\eta)}{\langle \eta \rangle^{s-\frac{1}{2}}} \frac{\widehat{v}(\tau - \lambda,\xi - \eta)}{\langle \xi - \eta \rangle^{s-1} \langle \tau - \lambda \pm_2 |\xi - \eta| \rangle^{\frac{1}{2}+}} \, . $$
An application of Theorem \ref{Theorem3} with $s_0=2-s$ , $b_0 = \frac{1}{2}--$ , $s_1=s-\frac{1}{2}$ , $b_1 =0$,  $s_2=s-1$ , $b_2= \frac{1}{2}+$ gives the required estimate for $ s > \frac{3}{4}$. \\ 
{\bf Case 2:} $(\frac{\langle \lambda - \tau \pm_2 |\xi - \eta| \rangle}{|\eta|})^{\frac{1}{2}}$ dominant.\\
We obtain 
$$
I \lesssim  \int  \frac{\widehat{w}(\tau,\xi)}{\langle \xi \rangle^{2-s} \langle \tau \pm |\xi| \rangle^{\frac{1}{2}--}}
\frac{\widehat{u}(\lambda,\eta)}{\langle \eta \rangle^{s-\frac{1}{2}}\langle \lambda \pm_1 |\eta| \rangle^{\frac{1}{2}+}} \frac{\widehat{v}(\tau - \lambda,\xi - \eta)}{\langle \xi - \eta \rangle^{s-1}} \, . $$
Theorem \ref{Theorem3} with $s_0=2-s$ , $b_0 = \frac{1}{2}--$ , $s_1=s-\frac{1}{2}$ , $b_1 =\frac{1}{2}+$ , $s_2=s-1$ , $b_2=0$ gives the claimed estimate. \\
{\bf Case 3:} $ (\frac{ \langle \tau \pm |\xi| \rangle}{|\eta|})^{\frac{1}{2}--}$ dominant. \\
{\bf Case 3a.} Either $n=3$ and $\pm_1$ and $\pm_2$ are the same signs or $n=2$.\\
We obtain 
$$
I \lesssim  \int  \frac{\widehat{w}(\tau,\xi)}{\langle \xi \rangle^{2-s}}
\frac{\widehat{u}(\lambda,\eta)}{\langle \eta \rangle^{s-\frac{1}{2}--}\langle \lambda \pm_1 |\eta| \rangle^{\frac{1}{2}+}} \frac{\widehat{v}(\tau - \lambda,\xi - \eta)}{\langle \xi - \eta \rangle^{s-1} \langle \tau - \lambda \pm_2 |\xi - \eta| \rangle^{\frac{1}{2}+}} \, . $$
Apply Theorem \ref{Theorem3} with $s_0=2-s$ , $b_0 = 0$ , $s_1=s-\frac{1}{2}--$ , $b_1 =\frac{1}{2}+$ , $s_2=s-1$,  $b_2= \frac{1}{2}+$. This is possible for $s>\frac{3}{4}$, where we remark that condition (\ref{2.13}) of Theorem \ref{Theorem3} has not to be fulfilled in the case $n=3$, because $\pm_1$ and $\pm_2$ are the same signs. (Otherwise (\ref{2.13}) would require $s > \frac{5}{6}$ for $n=3$.)\\
{\bf Case 3b:} $n=3$ and $\pm_1$ and $\pm_2$ are different signs.\\
In this case we use the improved estimate (\ref{angle2}) for the angle
\begin{align*}
\angle (\pm_1 \eta,\pm_2(\xi - \eta) 
& \lesssim \frac{|\xi|^{\frac{1}{2}}}{|\eta|^{\frac{1}{2}} |\eta - \xi|^{\frac{1}{2}}} \min(|\eta|,|\eta - \xi|)^{0++} \langle \tau \pm |\xi| \rangle^{\frac{1}{2}--} \\
&
\hspace{5em} + \frac{\langle \lambda \pm_1 |\eta| \rangle^{\frac{1}{2}} + \langle \tau - \lambda \pm_2 |\xi - \eta \rangle^{\frac{1}{2}}}{\min(|\eta|,|\eta - \xi|)^{\frac{1}{2}}} \, .
\end{align*}
It only remains to consider the first term on the right hand side because the other terms are the same as before. We obtain in this case
$$
I \lesssim  \int  \frac{\widehat{w}(\tau,\xi)}{\langle \xi \rangle^{\frac{3}{2}-s}}
\frac{\widehat{u}(\lambda,\eta)}{\langle \eta \rangle^{s-\frac{1}{2}--}\langle \lambda \pm_1 |\eta| \rangle^{\frac{1}{2}+}} \frac{\widehat{v}(\tau - \lambda,\xi - \eta)}{\langle \xi - \eta \rangle^{s-\frac{1}{2}} \langle \tau - \lambda \pm_2 |\xi - \eta| \rangle^{\frac{1}{2}+}}  $$
and apply Theorem \ref{Theorem3} with $s_0=\frac{3}{2}-s$ , $b_0 = 0$ , $s_1=s-\frac{1}{2}--$ , $b_1 =\frac{1}{2}+$ , $s_2=s-\frac{1}{2}$ , $b_2= \frac{1}{2}+$. This requires only $s > \frac{2}{3}$ for condition (\ref{2.13}). 

Finally we estimate the first summand $m(|\eta| + |\xi - \eta|)$. Assuming $|\xi - \eta| \le |\eta|$ without loss of generality we obtain the estimate
$$
I \lesssim  \int  \frac{\widehat{w}(\tau,\xi)}{\langle \xi \rangle^{2-s} \langle \tau \pm |\xi| \rangle^{\frac{1}{2}--}}
\frac{\widehat{u}(\lambda,\eta)}{\langle \eta \rangle^{s-1}\langle \lambda \pm_1 |\eta| \rangle^{\frac{1}{2}+}} \frac{\widehat{v}(\tau - \lambda,\xi - \eta)}{\langle \xi - \eta \rangle^s \langle \tau - \lambda \pm_2 |\xi - \eta| \rangle^{\frac{1}{2}+}} $$
and use Theorem \ref{Theorem3} with $s_0=2-s$ , $b_0 = \frac{1}{2}--$ , $s_1=s-1$ , $b_1 =\frac{1}{2}+$ , $s_2=s$,  $b_2= \frac{1}{2}+$, which requires only $ s > \frac{1}{2}$.

The quadratic term in (\ref{2.11}) can be handled in the same way, because $\mathcal{C}$ fulfills (\ref{2.14}), which is the same as (\ref{2.17}) without the linear term.

Before considering the cubic terms we treat the corresponding result in dimension $n=2$ needed for the proof of (\ref{4.15}).\\
{\bf Claim 2:} In the case $n=2$ we have
$$ \| |\nabla|^{-\epsilon} \mathcal{B}_{\pm_1,\pm_2} (\phi_{\pm_1},\phi_{\pm_2}) \|_{X^{s-2+\epsilon,-\frac{1}{2}++}_{\pm}} \lesssim \|\phi_{\pm_1}\|_{X^{s,\frac{1}{2}+}_{\pm_1}} \|\phi_{\pm_2}\|_{X^{s,\frac{1}{2}+}_{\pm_2}} \, . $$
If the frequencies $\xi$ of the product fulfills $|\xi| \ge 1$ we can use claim 1. Otherwise we have $\langle \eta \rangle \sim \langle \xi - \eta \rangle$. Arguing similarly as for claim 1 we have to show in case 1:
$$
\left| \int  \frac{\widehat{w}(\tau,\xi)}{|\xi|^{\epsilon} \langle \tau \pm |\xi| \rangle^{\frac{1}{2}--}}
\frac{\widehat{u}(\lambda,\eta)}{\langle \eta \rangle^{2s-\frac{3}{2}}} \frac{\widehat{v}(\tau - \lambda,\xi - \eta)}{ \langle \tau - \lambda \pm_2 |\xi - \eta| \rangle^{\frac{1}{2}+}}\right| \lesssim \|u\|_{L^2} \|v\|_{L^2}\|w\|_{L^2} \, .$$ 
Our assumption $s > \frac{3}{4}$ implies $2s-\frac{3}{2} > 0$, so that we only have to show
$$ \| |\nabla|^{-\epsilon}(uv)\|_{X^{0,-\frac{1}{2}++}_{\pm}} \lesssim \|u\|_{L^2} \|v\|_{X^{0,\frac{1}{2}+}_{\pm_2}} \, . $$ 
Using $|\xi| \le 1$ the left hand side is bounded by
\begin{align*}
\| |\nabla|^{-\epsilon}(uv)\|_{L^2_t H^{-2}_x} \lesssim \|uv\|_{L^2_t H^{-2,\frac{2}{1+\epsilon}}_x} \lesssim \|uv\|_{L^2_t L^1_x} &\lesssim \|u\|_{L^2_{xt}} \|v\|_{L^{\infty}_t L^2_x} \\
& \lesssim \|u\|_{L^2_{xt}} \|v\|_{X^{0,\frac{1}{2}+}_{\pm_2}} \, . 
\end{align*}
Similar arguments hold in the other cases. This completes the treatment of the quadratic terms.

Next we consider the cubic terms in (\ref{2.10}) and (\ref{2.11}).\\
{\bf Claim 3:} In the case $n=3$ we have
$$ \| \partial_l(A_{\nu \pm_3} \phi_{\pm_1} \overline{\phi}_{\pm_2}) \|_{X^{s-2,-\frac{1}{2}++}_{\pm}} \lesssim \|A_{\nu \pm_3} \|_{X^{r,1-\epsilon_0}_{\pm_3}} \|\phi_{\pm_1}\|_{X^{s,\frac{1}{2}+}_{\pm_1}}
\|\phi_{\pm_2}\|_{X^{s,\frac{1}{2}+}_{\pm_2}} \, . $$
This follows from the estimate
\begin{align*}
\| A_{\nu \pm_3} \phi_{\pm_1} \overline{\phi}_{\pm_2} \|_{X^{s-1,-\frac{1}{2}++}_{\pm}} &\lesssim \|A_{\nu \pm_3}\|_{X^{r,1-\epsilon_0}_{\pm_3}} \| \phi_{\pm_1} \phi_{\pm_2}\|_{X^{\frac{1}{4}+,0}_{\pm}} \\
&\lesssim \|A_{\nu \pm_3} \|_{X^{r,1-\epsilon_0}_{\pm_3}} \|\phi_{\pm_1}\|_{X^{s,\frac{1}{2}+}_{\pm_1}}
\|\phi_{\pm_2}\|_{X^{s,\frac{1}{2}+}_{\pm_2}} \, ,
\end{align*}
which is obtained by Theorem \ref{Theorem3} with $s_0=1-s$ , $b_0=\frac{1}{2}--$ , $s_1 =r$ , $b_1 = 1-\epsilon_0$,  $s_2 = \frac{1}{4}+$ , $b_2=0$ for the first estimate and $s_0=-\frac{1}{4}-$ , $b_0=0$ , $s_1 =s_2 =s$ , $b_1 = b_2 = \frac{1}{2}+$ for the last step.\\
{\bf Claim 4:} In the case $n=2$ we have
\begin{align*}
 &\| |\nabla|^{-\epsilon} \partial_l(A_{\nu \pm_3} \phi_{\pm_1} \overline{\phi}_{\pm_2}) \|_{X^{s-2+\epsilon,-\frac{1}{2}++}_{\pm}} \\
 & \hspace{5em} \lesssim \||\nabla|^{1-\epsilon} A_{\nu \pm_3} \|_{X^{r-1+\epsilon,1-\epsilon_0}_{\pm_3}} \|\phi_{\pm_1}\|_{X^{s,\frac{1}{2}+}_{\pm_1}}
\|\phi_{\pm_2}\|_{X^{s,\frac{1}{2}+}_{\pm_2}} \, . 
\end{align*}
We first assume frequencies of $A_{\nu \pm_3}$ which are $\ge 1$. We have
\begin{align*}
&\| |\nabla|^{-\epsilon} \partial_l(A_{\nu \pm_3} \phi_{\pm_1} \overline{\phi}_{\pm_2}) \|_{X^{s-2+\epsilon,-\frac{1}{2}++}_{\pm}} \lesssim \| A_{\nu \pm_3} \phi_{\pm_1} \overline{\phi}_{\pm_2} \|_{X^{s-1,-\frac{1}{2}++}_{\pm}} \\
& \lesssim \| A_{\nu \pm_3}\|_{X^{r,1-\epsilon_0}_{\pm_3}} \| \phi_{\pm_1} \phi_{\pm_2} \|_{X^{\frac{1}{4}+,0}_{\pm}} \lesssim \| |\nabla|^{1-\epsilon} A_{\nu \pm_3}\|_{X^{r-1+\epsilon,1-\epsilon_0}_{\pm_3}} \| \phi_{\pm_1} \phi_{\pm_2} \|_{X^{\frac{1}{4}+,0}_{\pm}} \,
\end{align*}
where the second estimate follows from Theorem \ref{Theorem3} with $s_0=1-s$ , $b_0=\frac{1}{2}--$,  $s_1 =r$ , $b_1 = 1-\epsilon_0$ , $s_2 = \frac{1}{4}+$ , $b_2=0$. Now the estimate
\begin{align*}
\|\phi_{\pm_1}\phi_{\pm_2} \|_{X^{\frac{1}{4}+,0}} & \lesssim 
\|\phi_{\pm_1}\|_{L^4_t H^{\frac{1}{4}+,4}_x} \|\phi_{\pm_2}\|_{L^4_t L^4_x} + \|\phi_{\pm_1}\|_{L^4_t L^4_x} \|\phi_{\pm_2}\|_{L^4_t H^{\frac{1}{4}+,4}_x} \\
& \lesssim \|\phi_{\pm_1}\|_{L^4_t H^{\frac{3}{4}+}_x} 
\|\phi_{\pm_2}\|_{L^4_t H^{\frac{1}{2}}_x} + \|\phi_{\pm_1}\|_{L^4_t H^{\frac{1}{2}}_x} \|\phi_{\pm_2}\|_{L^4_t H^{\frac{3}{4}+}_x} \\
& \lesssim \|\phi_{\pm_1}\|_{X^{\frac{3}{4}+,\frac{1}{2}+}_{\pm_1}} \|\phi_{\pm_2}\|_{X^{\frac{3}{4}+,\frac{1}{2}+}_{\pm_2}}
\end{align*}
gives the desired bound. For low frequencies $\le 1$ of $A_{\nu \pm_3}$ we obtain
\begin{align*}
&\| A_{\nu \pm_3} \phi_{\pm_1} \overline{\phi}_{\pm_2}\|_{X^{s-1,-\frac{1}{2}++}_{\pm}} \lesssim \| A_{\nu \pm_3} \phi_{\pm_1} \phi_{\pm_2}\|_{L^2_t L^2_x} \\
&\lesssim \| A_{\nu \pm_3}\|_{L^2_t L^{\frac{2}{\epsilon}}_x} \| \phi_{\pm_1} \|_{L^{\infty}_t L^{\frac{4}{1-\epsilon}}_x} \| \phi_{\pm_2}\|_{L^{\infty}_t L^{\frac{4}{1-\epsilon}}_x} \\
&\lesssim \| |\nabla|^{1-\epsilon} A_{\nu \pm_3}\|_{L^2_t L^2_x} \| \phi_{\pm_1} \|_{L^{\infty}_t H^{\frac{1+\epsilon}{2}}_x} \|\phi_{\pm_2}\|_{L^{\infty}_t H^{\frac{1+\epsilon}{2}}_x} \\
&\lesssim \| |\nabla|^{1-\epsilon} A_{\nu \pm_3}\|_{X^{r-1+\epsilon,1-\epsilon_0}_{\pm_3}} \| \phi_{\pm_1} \|_{X^{s,\frac{1}{2}+}_{\pm_1}} \|\phi_{\pm_2}\|_{X^{s,\frac{1}{2}+}_{\pm_2}} \, ,
\end{align*}
where we used in the last step that we have low frequencies of $A_{\nu \pm_3}$.

Finally we have to consider the term $\partial_t(A_k |\phi|^2)$. Using $\partial_t \phi = i \langle \nabla \rangle_m (\phi_+ - \phi_-)$ and $\partial_t A_k = i |\nabla|(A_{k_+} -A_{k_-})$ we obtain
 $$\partial_t (A_k |\phi|^2) = i|\phi|^2 |\nabla|(A_{k_+}-A_{k_-}) + iA_k \langle \nabla \rangle_m (\phi_+ - \phi_-) \overline{\phi} + A_k \phi \overline{i \langle \nabla \rangle_m (\phi_+ - \phi_-)} \, . $$
Now $|\nabla|(A_k |\phi|^2) \sim (|\nabla| A_k) |\phi|^2 + A_k \phi |\nabla|\phi$, so that one has to consider terms of the type $|\nabla|(A_k |\phi|^2)$ and $A_k \phi \langle \nabla \rangle \phi$. The first term was considered in claim 3. Thus it remains to show claim 5 and claim 6.\\
{\bf Claim 5:} In the case $n=3$ and $n=2$ we have
$$ \| A_{k \pm_3} \phi_{\pm_1} \langle \nabla \rangle \phi_{\pm_2} \|_{X^{s-2,-\frac{1}{2}++}_{\pm}} \lesssim \|A_{k \pm_3} \|_{X^{r,1-\epsilon_0}_{\pm_3}} \|\phi_{\pm_1}\|_{X^{s,\frac{1}{2}+}_{\pm_1}}
\|\phi_{\pm_2}\|_{X^{s,\frac{1}{2}+}_{\pm_2}} \, . $$
We obtain
\begin{align*}
\| A_{k \pm_3} \phi_{\pm_1} \langle \nabla \rangle \phi_{\pm_2} \|_{X^{s-2,-\frac{1}{2}++}_{\pm}} & \lesssim \|A_{k \pm_3} \|_{X^{r,1-\epsilon_0}_{\pm_3}} \|\phi_{\pm_1}
\langle \nabla \rangle \phi_{\pm_2}\|_{X^{m,0}_{\pm}} \\
& \lesssim \|A_{k \pm_3} \|_{X^{r,1-\epsilon_0}_{\pm_3}} \|\phi_{\pm_1}\|_{X^{s,\frac{1}{2}+}_{\pm_1}}
\|\langle \nabla \rangle \phi_{\pm_2}\|_{X^{s-1,\frac{1}{2}+}_{\pm_2}} \, ,
\end{align*} 
where $m=-\frac{1}{2}+$ for $n=3$ and $m=-\frac{1}{4}+$ for $n=2$. For the first step we used Theorem \ref{Theorem3} with $s_0=2-s$ , $b_0 = \frac{1}{2}--$ , $s_1=r$ , $b_1=1-\epsilon_0$, $s_2=m$ , $b_2=0$, and for the second step with $s_0=-m$ , $b_0 = 0$ , $s_1=s$ , $b_1=\frac{1}{2}+$, $s_2=s-1$ , $b_2=\frac{1}{2}+$.

Finally we have to consider the case of low frequencies of $A_{k \pm_3}$ and / or the product in the case $n=2$.\\
{\bf Claim 6:} In the case $n=2$ we have
\begin{align*}
 &\||\nabla|^{-\epsilon} (A_{k \pm_3} \phi_{\pm_1} \langle \nabla \rangle \phi_{\pm_2}) \|_{X^{s-2+\epsilon,-\frac{1}{2}++}_{\pm}} \\ 
 &\lesssim \||\nabla|^{1-\epsilon} A_{k \pm_3} \|_{X^{r-1+\epsilon,1-\epsilon_0}_{\pm_3}} \|\phi_{\pm_1}\|_{X^{s,\frac{1}{2}+}_{\pm_1}}
\|\phi_{\pm_2}\|_{X^{s,\frac{1}{2}+}_{\pm_2}} \, . 
\end{align*}
For low frequencies of $A_{k \pm_3}$ we want to show first
\begin{align}
\nonumber
 &\||\nabla|^{-\epsilon} (A_{k \pm_3} \phi_{\pm_1} \langle \nabla \rangle \phi_{\pm_2}) \|_{X^{s-2+\epsilon,-\frac{1}{2}++}_{\pm}} \\
 \label{*}
 &\lesssim \||\nabla|^{1-\epsilon} A_{k \pm_3} \|_{X^{r-1+\epsilon,1-\epsilon_0}_{\pm_3}} \|\phi_{\pm_1}
\nabla \phi_{\pm_2}\|_{X^{-\frac{1}{4},0}_{\pm}} \, ,
\end{align}
which follows from
\begin{align*}
&\int  \frac{\widehat{w}(\tau,\xi)}{\langle \xi \rangle^{2-s-\epsilon}|\xi|^{\epsilon} \langle \tau \pm |\xi| \rangle^{\frac{1}{2}--}}
\frac{\widehat{u}(\lambda,\eta)}{\langle \eta \rangle^{r-1+\epsilon} |\eta|^{1-\epsilon} \langle \lambda \pm_3 |\eta| \rangle^{1-\epsilon_0}} \frac{\widehat{v}(\tau - \lambda,\xi - \eta)}{ \langle \xi - \eta \rangle^{-\frac{1}{4}}} \\
&\lesssim \|u\|_{L^2} \|v\|_{L^2}\|w\|_{L^2} \, .
\end{align*} 
Now $|\eta| \le 1$ implies $\langle \eta \rangle \sim 1$ and $\langle \xi \rangle \sim \langle \xi - \eta \rangle $, so that the left hand side is equivalent to
$$\int  \frac{\widehat{w}(\tau,\xi)}{\langle \xi \rangle^{\frac{7}{4}-s-\epsilon}|\xi|^{\epsilon} \langle \tau \pm |\xi| \rangle^{\frac{1}{2}--}}
\frac{\widehat{u}(\lambda,\eta)}{ |\eta|^{1-\epsilon} \langle \lambda \pm_3 |\eta| \rangle^{1-\epsilon_0}} \widehat{v}(\tau - \lambda,\xi - \eta) \, .$$
This implies that (\ref{*}) is equivalent to
$$\|fg\|_{L^2_{xt}} \lesssim \| |\nabla|^{\epsilon} f \|_{X^{\frac{7}{4}-s-\epsilon,\frac{1}{2}--}_{\pm}} \| |\nabla|^{1-\epsilon} g \|_{X^{0,1-\epsilon_0}_{\pm_3}} \, . $$
This is true, because
\begin{align*}
\| fg \|_{L^2_{xt}} \lesssim \|f\|_{L^4_t L^{\frac{2}{1-\epsilon}}_x} \|g\|_{L^4_t L^{\frac{2}{\epsilon}}_x} & \lesssim \| |\nabla|^{\epsilon} f \|_{L^4_t L^2_x} \| |\nabla|^{1-\epsilon} g \|_{L_t^4 L^2_x} \\
& \lesssim \| |\nabla|^{\epsilon} f \|_{X^{\frac{7}{4}-s-\epsilon,\frac{1}{2}--}_{\pm}} \| |\nabla|^{1-\epsilon} g \|_{X^{0,1-\epsilon_0}_{\pm_3}} \, ,
\end{align*}
so that (\ref{*}) holds. Combining this with the estimate
$$ \|\phi_{\pm_1} \langle \nabla \rangle \phi_{\pm_2} \|_{X^{-\frac{1}{4},0}_{\pm}} \lesssim \|\phi_{\pm_1}\|_{X^{s,\frac{1}{2}+}_{\pm_1}}  \|\phi_{\pm_2}\|_{X^{s,\frac{1}{2}+}_{\pm_2}} $$
proven for claim 5 we arrive at the claimed estimate.

From now on we may assume high frequencies of $A_{k \pm_3}$ and low frequencies of the product. This implies that the frequencies of $A_{k \pm_3}$ and $\phi_{\pm_1} \langle \nabla \rangle \phi_{\pm_2}$ are equivalent. First we want to show
\begin{equation}
\label{5.1a}
\||\nabla|^{-\epsilon} (A_{k \pm_3} \phi_{\pm_1} \langle \nabla \rangle \phi_{\pm_2}) \|_{X^{s-2-\epsilon,-\frac{1}{2}++}_{\pm}} \lesssim \|A_{k \pm_3}\|_{X^{r,1-\epsilon_0}_{\pm_3}} \|\phi_{\pm_1} \langle \nabla \rangle \phi_{\pm_2}\|_{X^{-\frac{1}{4},0}_{\pm}} \, ,
\end{equation}
which follows from
$$
\int  \frac{\widehat{w}(\tau,\xi)}{|\xi|^{\epsilon} \langle \tau \pm |\xi| \rangle^{\frac{1}{2}--}}
\frac{\widehat{u}(\lambda,\eta)}{\langle \eta \rangle^r \langle \lambda \pm_3 |\eta| \rangle^{1-\epsilon_0}} \frac{\widehat{v}(\tau - \lambda,\xi - \eta)}{ \langle \xi - \eta \rangle^{-\frac{1}{4}}} \lesssim \|u\|_{L^2} \|v\|_{L^2}\|w\|_{L^2} \, .$$ 
Using $\langle \eta \rangle \sim \langle \xi - \eta \rangle$ and $r > \frac{1}{4}$ this follows from
$$
\||\nabla|^{-\epsilon} (A_{k \pm_3} \phi_{\pm_1} \langle \nabla \rangle \phi_{\pm_2}) \|_{X^{0,-\frac{1}{2}++}_{\pm}} \lesssim \|A_{k \pm_3}\|_{X^{0,1-\epsilon_0}_{\pm_3}} \|\phi_{\pm_1} \langle \nabla \rangle \phi_{\pm_2}\|_{X^{0,0}_{\pm}} \, ,
$$
Using that the product has low frequencies the left hand side is bounded by
\begin{align*}
&\||\nabla|^{-\epsilon} (A_{k \pm_3} \phi_{\pm_1} \langle \nabla \rangle \phi_{\pm_2}) \|_{L^2_t H^{-2}_x} \lesssim \|A_{k \pm_3} \phi_{\pm_1} \langle \nabla \rangle \phi_{\pm_2} \|_{L^2_t H^{-2,\frac{2}{1+\epsilon}}_x} \\
& \lesssim \||A_{k \pm_3} \phi_{\pm_1} \langle \nabla \rangle \phi_{\pm_2}) \|_{L^2_t L^1_x} \lesssim \|A_{k \pm_3}\|_{L^{\infty}_t L^2_x} \|\phi_{\pm_1} \langle \nabla \rangle \phi_{\pm_2}\|_{L^2_t L^2_x} \\
&\lesssim
\|A_{k \pm_3}\|_{X^{0,1-\epsilon_0}_{\pm_3}} \|\phi_{\pm_1} \langle \nabla \rangle \phi_{\pm_2}\|_{X^{0,0}_{\pm}} \, ,
\end{align*}
so that we have proven (\ref{5.1a}). Combining this with the estimate
$$ \|\phi_{\pm_1} \langle \nabla \rangle \phi_{\pm_2} \|_{X^{-\frac{1}{4},0}_{\pm}} \lesssim \|\phi_{\pm_1}\|_{X^{s,\frac{1}{2}+}_{\pm_1}}  \|\phi_{\pm_2}\|_{X^{s,\frac{1}{2}+}_{\pm_2}} $$
proven for claim 5 we finally obtain claim 6 for low frequencies of the product and high frequencies of $A_{k \pm_3}$.

This completes the proof of (\ref{4.14}) and (\ref{4.15}) and also the proof of Theorem \ref{Theorem1}.
\end{proof}

\end{document}